\newtheorem{theorem}{Theorem}
\newtheorem*{acknowledgement*}{Acknowledgement}
\newtheorem{corollary}[theorem]{Corollary}
\newtheorem*{example*}{Example}
\newtheorem{lemma}[theorem]{Lemma}
\newtheorem{proposition}[theorem]{Proposition}
\newtheorem{remark}[theorem]{Remark}
\begin{document}

\title{A remark on Einstein warped products}

\author{Michele Rimoldi}
\address{Dipartimento di Matematica\\
Universit\`a degli Studi di Milano\\
via Saldini 50\\
I-20133 Milano, ITALY}
\email{michele.rimoldi@unimi.it}

\subjclass[2000]{53C21}
\keywords{Einstein warped products, quasi-Einstein manifolds, triviality, scalar curvature}

%\date{\today}

\begin{abstract}
We prove triviality results for Einstein warped products with non-compact bases. These extend previous work by D.-S. Kim and Y.-H. Kim. The proofs, from the viewpoint of ``quasi-Einstein manifolds'' introduced by J. Case, Y.-S. Shu and G. Wei, rely on maximum principles at infinity and Liouville-type theorems.
\end{abstract}

\maketitle

\section{Introduction}

The main purpose of this note is to prove the following triviality result  for Einstein warped products which extends, to the case
of non-compact bases, a recent theorem by D.-S. Kim and Y.-H. Kim, \cite{KK}.

\begin{theorem}\label{main_th}
Let $N^{n+m}=M^{n}\times_{u}F^{m}$, $m>1$, be a complete Einstein warped product with non-positive scalar curvature $^{N}S\leq0$, warping function $u(x)=e^{-\frac{f\left(  x\right)  }{m}}$ satisfying $\inf_M f=f_*>-\infty$ and complete Einstein fibre $F$.
Then $N$ is simply a Riemannian product if either one of the following further conditions is satisfied:

\begin{enumerate}
\item[(a)] $f$ has a local minimum.

\item[(b)] the base manifold $M$ is complete and non-compact, the warping
function satisfies $\int_M |f|^p e^{-\frac{f}{m}}d\mathrm{vol}<+\infty$, for
some $1<p<+\infty$, and $f\left(  x_{0}\right)  \leq0$ for some point $x_{0}\in
M$.
\end{enumerate}
\end{theorem}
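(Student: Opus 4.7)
The plan is to rephrase the Einstein warped-product hypothesis as an $m$-quasi-Einstein condition on the base $M$ and then exploit weighted maximum principles at infinity and $L^p$-Liouville theorems. Writing $u=e^{-f/m}$, the conditions $\mathrm{Ric}^{N}=\lambda g_N$ (so $\lambda={}^{N}S/(n+m)\le 0$) and $\mathrm{Ric}^{F}=\mu g_F$ are equivalent, on $(M,g_M)$, to
\begin{equation*}
\mathrm{Ric}^M+\mathrm{Hess}\,f-\tfrac{1}{m}df\otimes df=\lambda g_M,\qquad \Delta_f f=m\lambda-m\mu\,e^{2f/m},
\end{equation*}
where $\Delta_f=\Delta-\langle\nabla f,\cdot\rangle$ is the drift Laplacian; equivalently, $u$ solves the semilinear equation $\Delta u^m=m\mu u^{m-2}-m\lambda u^m$. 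The hypothesis $\inf_M f>-\infty$ says $u$ is bounded above, and triviality of the warped product is precisely $f\equiv{\rm const}$.

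The first common step is to extract the sign condition $\mu\le 0$. In case (a), the interior local minimum $x_0$ forces $|\nabla f|(x_0)=0$ and $\Delta f(x_0)\ge 0$, so the scalar equation gives $\mu\le\lambda e^{-2f(x_0)/m}\le 0$. In case (b), the same inequality is obtained by applying the weak maximum principle at infinity for $\Delta_f$ to the bounded-above function $-f$; such a principle is available here as a Pigola-Rigoli-Setti type criterion activated by the integrability hypothesis, and produces a sequence along which $f\to f_*$ and $\Delta_f f\to 0^+$.

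For case (a), with $\mu\le 0$ the nonlinearity $G(t)=m\lambda-m\mu e^{2t/m}$ on the right-hand side is non-decreasing, so on a neighbourhood of $x_0$ the bound $f\ge f(x_0)$ gives $\Delta_f f\ge G(f(x_0))\ge 0$. I would then invoke the strong maximum principle for $u$ (or $u^m$) at its local maximum $x_0$, using the semilinear equation $\Delta u^m=m\mu u^{m-2}-m\lambda u^m$ together with the comparison $\mu\le\lambda u(x_0)^2$, to force $u$ locally constant; unique continuation for the quasi-Einstein PDE then extends this to $f\equiv{\rm const}$ on all of $M$.

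For case (b), the weight $e^{-f/m}d\mathrm{vol}$ of the hypothesis is paired with the operator $\Delta_{f/m}=\Delta-\tfrac{1}{m}\langle\nabla f,\cdot\rangle$; direct computation gives
\[
\Delta_{f/m}f=m\lambda-m\mu e^{2f/m}+\tfrac{m-1}{m}|\nabla f|^2,
\]
which is non-negative once $\lambda=0$ has been extracted (the case $\lambda<0$ being excluded by a refinement of the argument from the previous paragraph). Then $f_+=\max(f,0)$ is a non-negative $\Delta_{f/m}$-subharmonic function lying in $L^p(e^{-f/m}d\mathrm{vol})$, so a weighted $L^p$-Liouville theorem forces $f_+\equiv c\ge 0$; the pointwise condition $f(x_0)\le 0$ rules out $c>0$, hence $f\le 0$ everywhere. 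In this region $u\ge 1$ upgrades the summability to the heavier weight $e^{-f}d\mathrm{vol}$ compatible with $\Delta_f$, and a symmetric Liouville argument applied to $-f\ge 0$ closes the rigidity. The central technical difficulty is precisely this weight-matching: the quasi-Einstein structure naturally produces the drift $\Delta_f$ with weight $e^{-f}$, whereas the integrability in (b) is framed against the lighter weight $e^{-f/m}$ and hence $\Delta_{f/m}$; selecting the right Liouville statement from the Pigola-Rigoli-Setti family, carrying out the separate analysis needed to exclude $\lambda<0$ in (b), and bootstrapping the local conclusion of (a) to a global one via unique continuation, are where the substantive effort lies.
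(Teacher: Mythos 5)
Your overall strategy (pass to the $m$-quasi-Einstein structure on the base, then use weak maximum principles at infinity and the Pigola--Rigoli--Setti $L^p$-Liouville theorems) is the paper's strategy, and your WMP step extracting $\mu e^{2f_*/m}\le\lambda$ (hence $\mu\le 0$) is correct. But both cases then break down. In case (a) the maximum-principle step runs in the wrong direction: near the local minimum $x_0$ your own inequalities give $f\ge f(x_0)$ and $\Delta_f f=G(f)\ge G(f(x_0))\ge 0$, i.e.\ $f$ is locally $\Delta_f$-\emph{sub}harmonic at an interior \emph{minimum}; equivalently, since $\mu\le\lambda u(x_0)^2\le 0$ and $u\le u(x_0)$ give $\mu u^{-2}-\lambda\le 0$, one computes $\Delta u^m=mu^m\left(\mu u^{-2}-\lambda\right)\le 0$ near $x_0$, so $u^m$ is \emph{super}harmonic at an interior \emph{maximum}. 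In neither formulation does the strong maximum principle say anything (subharmonic functions have interior minima, superharmonic functions interior maxima, without being constant), so no local constancy is ever produced and the unique-continuation step has nothing to propagate. This is exactly where the paper injects global information that your sketch lacks: the scalar curvature estimate $S\ge n\lambda$ of Theorem \ref{scal_est} (itself proved via the weak maximum principle for $\Delta_{\tilde f}$, $\tilde f=\frac{m+2}{m}f$, applied to the equation for $\Delta_{\tilde f}S$, using $f_*>-\infty$), combined with the traced equation $\Delta_{\hat f}f=n\lambda-S$, $\hat f=f/m$, evaluated at $x_0$ to force $S(x_0)=n\lambda$, whence the rigidity statement makes $M$ Einstein, $\mathrm{Hess}(f)=\frac{1}{m}df\otimes df\ge 0$, and the local minimum becomes absolute (Proposition \ref{rmk_min}, Remark \ref{Dezhong}), yielding $f$ constant.

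Case (b) has two genuine errors. First, ``excluding $\lambda<0$'' is impossible: a complete non-compact finite-volume Einstein base with $Ric=\lambda g_M$, $\lambda<0$, constant $f\equiv c\le 0$ and fibre constant $\mu=\lambda e^{-2c/m}$ satisfies every hypothesis of (b); so you must prove constancy also when $\lambda<0$, and your sketch offers nothing there (the WMP only yields $\mu e^{2f_*/m}\le\lambda$, which is perfectly consistent with $\lambda<0$). The paper handles all $\lambda\le 0$ uniformly through $S\ge n\lambda$. Second, your closing step is reversed twice: on the region $f\le 0$ one has $e^{-f}\ge e^{-f/m}$, so the hypothesis $f\in L^p(e^{-f/m}d\mathrm{vol})$ does \emph{not} upgrade to $L^p(e^{-f}d\mathrm{vol})$ --- the inclusion goes the other way --- and, worse, in your setup $f$ is $\Delta_{\hat f}$-subharmonic, so $-f$ is \emph{super}harmonic, and the $L^p$-Liouville theorem of \cite{PRiS} (Theorem 14, stated for non-negative subharmonic functions) does not apply to it. The paper's orientation is the opposite one and is what makes the Liouville theorem bite: from $S\ge n\lambda$ one gets $\Delta_{\hat f}f=n\lambda-S\le 0$, so $f_-=\max\{-f,0\}$ is non-negative, $\Delta_{\hat f}$-subharmonic and in $L^p(e^{-\hat f}d\mathrm{vol})$, hence constant; together with $f(x_0)\le 0$ this gives $f\equiv const$ for every $\lambda\le 0$ at once. (Minor point: the validity of the weak maximum principle comes from Qian's weighted volume estimates under $Ric_f^m=\lambda g_M$, not from your integrability hypothesis.)
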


Note that, in case $M$ is compact, from the point (a) we recover the main result in \cite{KK}.

Our proof of Theorem \ref{main_th} will rely on the
link between Einstein warped product metrics and the so called ``quasi-Einstein metrics''
recently introduced by J. Case, Y.-S. Shu and G. Wei, \cite{CSW}.
In the  spirit of \cite{PRiS}, i.e. using methods from stochastic analysis and $L^p$-Liouville type theorems, we shall  prove scalar curvature estimates and triviality results
for a complete quasi-Einstein manifold that largely extend previous theorems
in \cite{CSW}. Whence, the main theorem will follow immediately.

In a final section, using similar techniques, we extend another triviality result for Einstein warped products obtained in the very recent \cite{C}. A non-existence result is also discussed.

\section{Quasi-Einstein manifolds}
Consider the weighted manifold $(M^n,g_M, e^{-f}d\rm{vol})$, where $M$ is a complete $n$-dimensional Riemannian manifold, $f$ is a smooth real valued function on $M$ and $d\rm{vol}$ is the Riemannian volume density on $M$. A natural extension of the Ricci tensor to weighted manifolds is the $m$-Bakry-Emery Ricci tensor
\[
\ Ric_f^m=Ric+Hessf-\frac{1}{m}df\otimes df,\quad\textrm{for}\quad0<m\leq\infty.
\]
When $f$ is constant, this is the usual Ricci tensor and when $m=\infty$ this is the Ricci Bakry-Emery tensor $Ric_f$.
We call a metric $m$-quasi-Einstein if the $m$-Bakry-Emery Ricci tensor satisfies the equation
\begin{equation}\label{QE}
Ric_f^m=\lambda g_M,
\end{equation}
for some $\lambda\in\mathbb{R}$. This equation is especially interesting in that when $m=\infty$ it is exactly the gradient Ricci soliton equation.  When f is constant, it gives the Einstein equation and we call the quasi-Einstein metric trivial. When $m$ is a positive integer, it corresponds to warped product Einstein metrics.

Indeed, in \cite{CSW}, elaborating on \cite{KK}, it is observed the following characterization of quasi-Einstein metrics.
\begin{theorem}\label{CSW_warped}
Let $M^n\times_{u}F^m$ be an Einstein warped product with Einstein constant $\lambda$, warping function $u=e^{-\frac{f}{m}}$ and Einstein fibre $F^m$.
Then the weighted manifold $(M^n, g_M, e^{-f}d\rm{vol})$ satisfies the quasi-Einstein equation \eqref{QE}. Furthermore the Einstein constant $\mu$ of the fibre satisfies
\begin{equation}\label{EinsteinConstants}
\Delta f-\left|\nabla f\right|^2=m\lambda-m\mu e^{\frac{2}{m}f}.
\end{equation}
Conversely if the weighted manifold $(M^n,g_M,e^{-f}d\rm{vol})$ satisfies \eqref{QE}, then $f$ satisfies \eqref{EinsteinConstants} for some constant $\mu\in\mathbb{R}$. Consider the warped product $N^{n+m}=M^n\times_u F^m$, with $u=e^{-\frac{f}{m}}$ and Einstein fibre $F$ with $^F Ric=\mu g_F$. Then $N$ is Einstein with $^N Ric=\lambda g_N$.
\end{theorem}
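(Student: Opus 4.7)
The plan is to use the classical decomposition of the Ricci tensor of a warped product $N = M \times_u F$ with metric $g_N = g_M + u^2 g_F$. Writing $X,Y$ for horizontal lifts and $V,W$ for vertical lifts, the three blocks read
\begin{align*}
{}^N Ric(X,Y) &= {}^M Ric(X,Y) - \frac{m}{u}\,Hess(u)(X,Y),\\
{}^N Ric(X,V) &= 0,\\
{}^N Ric(V,W) &= {}^F Ric(V,W) - \bigl(u\,\Delta u + (m-1)|\nabla u|^2\bigr)\,g_F(V,W),
\end{align*}
where all operations on $u$ are taken on $M$. Substituting $u = e^{-f/m}$, direct computations give $-\frac{m}{u}Hess(u) = Hess(f) - \frac{1}{m}df\otimes df$ and $u\,\Delta u + (m-1)|\nabla u|^2 = -\frac{u^2}{m}(\Delta f - |\nabla f|^2)$.

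This reduces the forward direction to a mere rewriting: the hypothesis ${}^N Ric = \lambda g_N$ read on horizontal vectors becomes precisely the quasi-Einstein equation \eqref{QE}, while reading it on vertical vectors, together with ${}^F Ric = \mu g_F$ and the fact that $g_N(V,W) = u^2 g_F(V,W)$ on vertical pairs, produces the scalar identity $\lambda u^2 = \mu + \frac{u^2}{m}(\Delta f - |\nabla f|^2)$, which rearranges to \eqref{EinsteinConstants}.

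For the converse, the nontrivial point is to produce the constant $\mu$ out of the quasi-Einstein equation alone. Guided by the forward direction, the natural candidate is
\[
\mu := \tfrac{1}{m}\bigl(m\lambda - \Delta f + |\nabla f|^2\bigr)\, e^{-2f/m},
\]
and I would verify that $d\mu = 0$ as follows. Differentiating $Ric_f^m = \lambda g_M$ and applying the divergence, the second Bianchi identity and the Ricci commutation formula $\nabla^i\nabla_i\nabla_j f = \nabla_j\Delta f + {}^M Ric_{jk}\nabla^k f$, together with the traced equation $R + \Delta f - \frac{1}{m}|\nabla f|^2 = n\lambda$, yields the identity
\[
\nabla(\Delta f - |\nabla f|^2) = \tfrac{2}{m}\bigl(\Delta f - |\nabla f|^2 - m\lambda\bigr)\nabla f,
\]
which is exactly equivalent to $d\mu = 0$. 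Once $\mu$ has been produced as a constant, one chooses any complete Einstein $m$-manifold $F$ with ${}^F Ric = \mu g_F$ (such a model exists for every real $\mu$) and runs the three Ricci block formulas above in reverse to conclude that $N = M \times_u F$ is Einstein with constant $\lambda$.

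The main obstacle is the Bianchi-type identity certifying that $\mu$ is constant; the warped-product Ricci formulas themselves are classical, but the signs and the $u^2$-rescaling of the fibre metric demand careful bookkeeping, especially in the vertical block and in the divergence calculation leading to the identity for $\nabla(\Delta f - |\nabla f|^2)$.
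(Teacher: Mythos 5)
Your proof is correct: the block formulas for the Ricci tensor of $M\times_u F$, the substitutions $-\frac{m}{u}\,Hess(u)=Hess(f)-\frac{1}{m}\,df\otimes df$ and $u\,\Delta u+(m-1)|\nabla u|^2=-\frac{u^2}{m}(\Delta f-|\nabla f|^2)$, and the Bianchi-type identity $\nabla(\Delta f-|\nabla f|^2)=\frac{2}{m}(\Delta f-|\nabla f|^2-m\lambda)\nabla f$ (obtained from the divergence of \eqref{QE}, the contracted second Bianchi identity, the commutation formula and the traced equation) all check out, and the last identity is exactly $d\mu=0$ for your candidate $\mu$. The paper itself gives no proof of this theorem --- it is quoted from \cite{CSW}, elaborating on \cite{KK} --- and your argument is essentially the one in those references: the warped-product Ricci decomposition in both directions plus the divergence computation certifying that $\mu$ is constant.
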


\section{Scalar curvature estimates}

In this section, in the same spirit of Theorem 3 of \cite{PRiS}, we generalize the scalar curvature estimates in Proposition 3.6 of \cite{CSW} to quasi-Einstein manifolds with non-constant scalar curvature. Possible rigidity at the endpoints is also discussed.

\begin{theorem}\label{scal_est}
Let $(M^{n}, g_M, e^{-f}d\rm{vol}) $ be a geodesically complete $m$-quasi- Einstein manifold, $1< m<+\infty$, with scalar curvature $S$ and let $S_*= \inf_M S$.\\
\begin{enumerate}
	\item [(a)]If $\lambda>0$, then $M$ is compact and
	\begin{equation}\label{Scal_lambda+}
	\frac{n(n-1)}{m+n-1}\lambda< S_*\leq n\lambda.
	\end{equation}
	Moreover $S_*\neq n\lambda$ unless $M$ is Einstein. 
	\item[(b)] If $\lambda=0$ and $\inf_M f=f_*>-\infty$ then $S_*=0$. Moreover, either $S>0$ or $S(x)\equiv0$. In this latter case, either $f$ is constant (and $M$ is trivial) or $M$ is isometric to the Riemannian product $\mathbb{R}\times\Sigma$ where $\Sigma$ is a  Ricci-flat, totally geodesic hypersurface.
	\item[(c)]If $\lambda<0$ and $\inf_M f=f_*>-\infty$, then
	\begin{equation}\label{Scal_lambda-}
	n\lambda\leq S_*\leq\frac{n(n-1)}{m+n-1}\lambda
	\end{equation}
	and $S(x)>n\lambda$ unless $M$ is Einstein.
\end{enumerate}
\end{theorem}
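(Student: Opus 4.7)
The argument begins with the two scalar identities obtained from \eqref{QE} and \eqref{EinsteinConstants}: taking the trace of \eqref{QE} yields
\[
S+\Delta f-\frac{|\nabla f|^{2}}{m}=n\lambda,
\]
and eliminating $\Delta f$ with \eqref{EinsteinConstants} gives the algebraic formula
\[
S=(n-m)\lambda+m\mu\,e^{2f/m}-\frac{m-1}{m}|\nabla f|^{2}.
\]
In the substitution $u=e^{-f/m}>0$ the trace equation rewrites as $\Delta u=(S-n\lambda)u/m$, and a short computation from the twice-contracted Bianchi identity produces the gradient formula $\nabla S=2\mu e^{2f/m}\nabla f-\frac{m-1}{m}\nabla|\nabla f|^{2}$. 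These relations, plus the Bochner formula for $|\nabla f|^2$, are the only tools I would need.

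In case (a) the bound $Ric_{f}^{m}\ge\lambda g>0$ with $m<+\infty$ forces $M$ compact by Qian's weighted Myers theorem. Evaluating the trace equation at a minimum point of $f$ (where $\nabla f=0$ and $\Delta f\ge0$) gives $S_{*}\le n\lambda$, and rigidity at this upper endpoint follows by integrating $\Delta_{f}f=n\lambda-S-\frac{m-1}{m}|\nabla f|^{2}$ against $e^{-f}\,d\mathrm{vol}$ on the closed weighted manifold: the integral of $\Delta_{f}f$ vanishes, and under the hypothesis $S\ge n\lambda$ the two nonnegative summands must each vanish, so $f$ is constant and $M$ is Einstein. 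For the strict lower bound, substituting the gradient formula into $\Delta_{f}S$, applying Bochner to $|\nabla f|^{2}$, and using the Kato inequality $|Hess\,f|^{2}\ge(\Delta f)^{2}/n$ yields, after a computation that reorganises itself remarkably well, an elliptic inequality of the form
\[
\Delta_{f}S\le-\frac{2(n+m-1)}{mn}\bigl(S-n\lambda\bigr)\Bigl(S-\tfrac{n(n-1)\lambda}{m+n-1}\Bigr)+R(|\nabla f|,S),
\]
where $R$ vanishes wherever $\nabla f=0$. Evaluating at a minimum point of $S$ on compact $M$ then sandwiches $S_{*}$ between the two roots of the quadratic, and the strict inequality at the lower endpoint is extracted from the rigidity case of Kato's inequality (forcing $Hess\,f$ to be a pointwise multiple of the metric).

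For (b) and (c) compactness is lost, so I would replace the extremum argument by the weighted Omori--Yau principle of Wei--Wylie and the $L^{p}$-Liouville theorems of \cite{PRiS}, both available because $Ric_{f}^{m}$ is bounded below and $f_{*}>-\infty$. One obtains a sequence $\{x_{k}\}\subset M$ along which $f(x_{k})\to f_{*}$, $|\nabla f|(x_{k})\to 0$ and $\Delta f(x_{k})\ge-\varepsilon_{k}\downarrow 0$; inserted into the trace equation this immediately gives $S_{*}\le n\lambda$, covering the upper bound both in (b) (where $n\lambda=0$) and in (c) (since $n\lambda\le\tfrac{n(n-1)\lambda}{m+n-1}$ automatically when $\lambda<0$). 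The matching lower bounds and the sign statement on $S$ would then come from the $u$-equation $\Delta u=(S-n\lambda)u/m$: applying an $L^{p}$-Liouville theorem to a suitable power of $u$ (bounded above since $f_{*}>-\infty$) pins down the sign of the auxiliary constant $\mu$ and, through the algebraic formula above, that of $S$. The splitting $M=\mathbb{R}\times\Sigma$ in the $S\equiv 0$ alternative of (b) is extracted from the equality case of the weighted Bochner identity, which forces $\nabla f$ to be parallel and enables a de Rham decomposition. The main obstacle throughout is upgrading the non-strict differential and integral inequalities to strict endpoint rigidity, a step that in the non-compact cases requires replacing pointwise maximum principles by their $L^{p}$ counterparts on the weighted space $(M,e^{-f}d\mathrm{vol})$.
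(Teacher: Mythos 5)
Your proposal has genuine gaps in both the compact and the non-compact cases, and it misses the single identity that drives the paper's whole proof. The paper's engine is the pointwise formula (Lemma \ref{formulae}, due to Case--Shu--Wei)
\[
\tfrac{1}{2}\Delta_{\tilde{f}} S= -\tfrac{m-1}{m}\bigl|Ric-\tfrac{1}{n}Sg_M\bigr|^2-\tfrac{m+n-1}{mn}(S-n\lambda)\bigl(S-\tfrac{n(n-1)}{m+n-1}\lambda\bigr),
\qquad \tilde{f}=\tfrac{m+2}{m}f,
\]
in which there is \emph{no} residual gradient term: what you call $R(|\nabla f|,S)$ is in fact absorbed into the nonpositive traceless-Ricci term. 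Your version, with a remainder vanishing only where $\nabla f=0$, cannot be closed: at a minimum point of $S$ you control $\nabla S$ and the sign of $\Delta_{\tilde f}S$, not $\nabla f$, so the ``sandwich'' between the two roots fails. Likewise your strict lower bound in (a) via ``Kato rigidity'' does not produce a contradiction; the paper instead applies the strong minimum principle to $v=S-\frac{n(n-1)}{m+n-1}\lambda$, concludes $Ric=\frac{n-1}{m+n-1}\lambda g_M$, and then reads off from \eqref{QE} that $Hess(f)=\frac{1}{m}df\otimes df+\frac{m}{m+n-1}\lambda g_M>0$, i.e.\ a strictly convex function on a compact manifold --- impossible. (Your integration-by-parts argument for the endpoint $S_*=n\lambda$ in the compact case is correct and a pleasant alternative to the paper's; also note a small slip in (b): it is $\nabla u$, not $\nabla f$, that is parallel, since $Hess(f)=\frac1m df\otimes df$ gives $Hess(u)=0$.)

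In the non-compact cases (b) and (c) two steps are unjustified. First, you invoke a full Omori--Yau principle for $f$ with $|\nabla f|(x_k)\to0$; under the sole hypotheses $Ric_f^m=\lambda g_M$ and $f_*>-\infty$ only the \emph{weak} maximum principle at infinity is available (Qian's volume estimates plus Theorem 9 of \cite{PRiS}), with no gradient control --- and it holds for the $\tilde f$-Laplacian precisely because $e^{-\tilde f}=e^{-\frac2m f}e^{-f}\le e^{-\frac2m f_*}e^{-f}$, which is the actual role of the hypothesis $f_*>-\infty$ that your plan never exploits. A red flag: your step would yield $S_*\le n\lambda$ in case (c), strictly \emph{stronger} than the asserted bound $S_*\le\frac{n(n-1)}{m+n-1}\lambda$ (inequalities flip since $\lambda<0$), and combined with the lower bound it would force $S_*=n\lambda$ always --- not what the theorem claims. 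Second, your route to the lower bounds via an $L^p$-Liouville theorem applied to a power of $u$ is both inapplicable and circular: Theorem \ref{scal_est} carries no integrability hypothesis (boundedness $u\le e^{-f_*/m}$ is not an $L^p$ condition), and the sign of $S-n\lambda$ in $\Delta u=\frac{(S-n\lambda)u}{m}$ is exactly what you are trying to establish; moreover even knowing the sign of $\mu$, your identity $S=(n-m)\lambda+m\mu e^{2f/m}-\frac{m-1}{m}|\nabla f|^2$ gives no lower bound because of the uncontrolled gradient term. You are also missing a necessary preliminary: before any maximum principle can be applied to $S$ one must show $S_*>-\infty$, which the paper does by applying the a priori estimate (Theorem 12 of \cite{PRiS}) to $S_-=\max\{-S,0\}$, which satisfies \eqref{ineq_main}; the weak maximum principle applied to $S$ then yields \eqref{ineq_S*}, giving \emph{both} bounds simultaneously, and the strong minimum principle applied to $S-n\lambda$ (resp.\ to $S$ when $\lambda=0$) gives all the rigidity statements in one stroke.
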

%by Myers' theorem $M$ is compact, \cite{Q}.
%\begin{remark}
%In the point (a) of Theorem \ref{scal_est} we used the weighted version of Myers' theorem to conclude the compactness of $M$. For some generalization of this result to complete weighted manifolds with a lower bound on $Ric_f^m$, $m<+\infty$, see the Appendix.}
%\end{remark}
The proof of Theorem \ref{scal_est} will require the following formula obtained in \cite{CSW}, which generalizes to the case $m<+\infty$ similar formulas for Ricci solitons ($m=+\infty$) obtained previously by P. Petersen and W. Wylie, \cite{PW_rig}. Following the terminology introduced in \cite{PW_class}, the $f$-Laplacian on the weighted manifold $(M,g_M,e^{-f}d\rm{vol})$ is the diffusion type operator defined by
$\Delta_fu=e^f \rm{div}(e^{-f}\nabla u).$ It is clearly a symmetric operator on $L^2(M,e^{-f}d\rm{vol})$.
\begin{lemma}\label{formulae}
Let $Ric_f^m=\lambda g_M$, for some $\lambda\in \mathbb{R}$ and  $m<+\infty$. Set $\tilde{f}=\frac{m+2}{m}f$. Then

\begin{equation}\label{eq_Scal1}
\frac{1}{2}\Delta_{\tilde{f}} S= -\frac{m-1}{m}|Ric-\frac{1}{n}Sg_M|^2-\frac{m+n-1}{mn}(S-n\lambda)(S-\frac{n(n-1)}{m+n-1}\lambda).
\end{equation}

%\begin{eqnarray}
%&&\frac{1}{2}\Delta|\nabla f|^2=|Hessf|^2-Ric(\nabla f,\nabla f)+\frac{2}{m}|\nabla f|^2\Delta f,\label{eq_Bochner}\\
%&&\frac{1}{2}\nabla S=\frac{m-1}{m}Ric(\nabla f)+\frac{1}{m}(S-(n-1)\lambda)\nabla f,\nonumber\\
%&&\frac{1}{2}\Delta S-\frac{m+2}{2m}\nabla_{\nabla f}S\label{eq_Scal1}\\
%&&=-\frac{m-1}{m}|Ric-\frac{1}{n}Sg_M|^2-\frac{m+n-1}{mn}(S-n\lambda)(S-\frac{n(n-1)}{m+n-1}\lambda).\nonumber
%\end{eqnarray}
\end{lemma}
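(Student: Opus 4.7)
The plan is to derive the identity in two steps: first a first-order identity for $\nabla S$ (analogous to the well-known soliton identity $\nabla S = 2\,Ric(\nabla f)$), and then a second differentiation that is packaged so as to recover $\Delta_{\tilde{f}} S$.

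I start by tracing the quasi-Einstein equation
\[
Ric_{ij}+\nabla_i\nabla_j f-\tfrac{1}{m}\nabla_i f\,\nabla_j f=\lambda\,g_{ij}
\]
to obtain $\Delta f=n\lambda-S+\tfrac{1}{m}|\nabla f|^{2}$. Applying $\nabla^{i}$ to the full equation and invoking the twice-contracted Bianchi identity $\nabla^{i}Ric_{ij}=\tfrac{1}{2}\nabla_{j}S$ together with the commutator $\nabla^{i}\nabla_{i}\nabla_{j}f=\nabla_{j}(\Delta f)+Ric_{jk}\nabla^{k}f$, and using the quasi-Einstein equation itself to rewrite $\nabla^{i}f\,\nabla_{i}\nabla_{j}f$ (an algebraic step that kills the offending $|\nabla f|^{2}$ terms), I expect to arrive at the first-order identity
\[
\tfrac{1}{2}\nabla_{j}S=\tfrac{m-1}{m}\,Ric_{jk}\nabla^{k}f+\tfrac{S-(n-1)\lambda}{m}\nabla_{j}f. \qquad(\ast)
\]

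In the second step I take one more divergence of $(\ast)$. A key input is the contracted identity $\langle Ric,Hess\,f\rangle=\lambda S-|Ric|^{2}+\tfrac{1}{m}Ric(\nabla f,\nabla f)$, obtained by pairing the quasi-Einstein equation with $Ric$. The crucial observation is that the specific weight $\tilde{f}=\tfrac{m+2}{m}f$ makes the cumulative coefficient of $\langle \nabla f,\nabla S\rangle$ in $\tfrac{1}{2}\Delta_{\tilde{f}}S$ equal to $-\tfrac{1}{2m}$; employing $(\ast)$ once more to rewrite this term via $Ric(\nabla f,\nabla f)$ and $|\nabla f|^{2}$, and substituting the trace formula for $\Delta f$, both the $Ric(\nabla f,\nabla f)$ pieces and the $|\nabla f|^{2}$ pieces cancel exactly. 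What survives is the pointwise identity
\[
\tfrac{1}{2}\Delta_{\tilde{f}}S=-\tfrac{m-1}{m}|Ric|^{2}+\tfrac{m-1}{m}\lambda S+\tfrac{1}{m}(S-(n-1)\lambda)(n\lambda-S).
\]

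The final step is purely algebraic: splitting $|Ric|^{2}=\bigl|Ric-\tfrac{S}{n}g_{M}\bigr|^{2}+\tfrac{S^{2}}{n}$ and matching coefficients of $S^{2}$, $\lambda S$ and $\lambda^{2}$, the remaining polynomial in $S$ factors as $-\tfrac{m+n-1}{mn}(S-n\lambda)\bigl(S-\tfrac{n(n-1)}{m+n-1}\lambda\bigr)$, giving the stated formula. I expect the main difficulty to lie in the bookkeeping of the second divergence: one has to check that the factor $\tfrac{m+2}{m}$ in $\tilde{f}$ is exactly what forces every $\nabla f$-dependent quantity to disappear, which is the structural reason for this otherwise-unusual choice of drift.
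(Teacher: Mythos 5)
Your proposal is correct: I checked the first-order identity $(\ast)$, the coefficient $-\tfrac{1}{2m}$ of $\langle\nabla f,\nabla S\rangle$ produced by the weight $\tilde f=\tfrac{m+2}{m}f$, the cancellation of the $Ric(\nabla f,\nabla f)$ and $|\nabla f|^2$ terms, and the final factorization, and all steps go through; indeed your intermediate expression agrees with the expanded form \eqref{ineq_scal} used later in the paper. Note that the paper gives no proof of Lemma \ref{formulae} itself but quotes it from \cite{CSW}, and your derivation is essentially the same Bianchi-plus-commutation computation carried out there.
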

%Combining equation \eqref{eq_Scal1} with the maximum principle, in \cite{CSW} some rigidity results and curvature %estimates are proved for quasi-Einstein metrics assuming either that  $M$ is compact ($\lambda>0$) or that the scalar %curvature is constant.

%Now we give a proof of Theorem \ref{scal_est} which generalizes in some directions their result.
\begin{proof}[Proof (of Theorem \ref{scal_est})]
First of all, we show that $\inf_M S>-\infty$. According to Qian version of Myers' theorem this is obvious if $\lambda>0$ because $M$ is compact, see also the Appendix. In the general case $\lambda\in\mathbb{R}$ we proceed as follows. Since
\[
\ -\left|Ric-\frac{1}{n}Sg_M\right|^2=-\left|Ric\right|^2+\frac{S^2}{n},
\]
from \eqref{eq_Scal1} we obtain
\begin{align}\label{ineq_scal}
\frac{1}{2}\Delta_{\tilde{f}}S=&-\frac{m-1}{m}|Ric|^2-\frac{1}{m}S^2+\frac{m+2n-2}{m}S\lambda-\frac{n(n-1)}{m}\lambda^2.\\
\leq&-\frac{1}{m}S^2+\frac{m+2n-2}{m}\lambda S.\nonumber
\end{align}
Let $S_{-}(x)=\max\{-S(x),0\}$. Then
\begin{equation}\label{ineq_main}
\Delta_{\tilde{f}}S_-\geq\frac{2}{m}S_-^2+\frac{2(m+2n-2)}{m}\lambda S_-.
\end{equation}
Observe now that from Qian's estimates of weighted volumes (\cite{Q}, see also section 2 in \cite{MRS} and references therein), since $vol_{\tilde{f}}(B_r)\leq e^{-\frac{2}{m}f_*}vol_f(B_r)$, we can apply the ``a-priori'' estimate in Theorem 12 of \cite{PRiS} to inequality \eqref{ineq_main} on the complete weighted manifold $(M,g_M,e^{-\tilde{f}}d\rm{vol})$ and we obtain that $S_{-}$ is bounded from above, or equivalently, $S_{*}=\inf_M S>-\infty$.
Again from the volume estimates in \cite{Q} and by Theorem 9 in \cite{PRiS} applied to $(M,g_M,e^{-\tilde{f}}d\rm{vol})$, the weak maximum principle at infinity for the $\tilde{f}$-laplacian holds on $M$. This produces a sequence $\{x_k\}$ such that $\Delta_{\tilde{f}}S(x_{k})\geq-\frac{1}{k}$ and $S(x_{k})\rightarrow S_{*}$. Taking the $\liminf$ in \eqref{eq_Scal1} along $\{x_{k}\}$ shows that, for $m>1$,
\begin{equation}\label{ineq_S*}
0\leq-\frac{m+n-1}{mn}( S_*-n\lambda)(S_*-\frac{n(n-1)}{m+n-1}\lambda).
\end{equation}
We now distinguish three cases.\smallskip

\noindent (a) Assume $\lambda>0$, so that $M$ is compact. Equation (\ref{ineq_S*}) yields $\frac{n(n-1)}{m+n-1}\lambda\leq S_{*}\leq n\lambda$. Assume now that $S_{*}=n\lambda>0$. Then $S\geq n\lambda\geq\frac{n(n-1)}{m+n-1}\lambda$ and from \eqref{eq_Scal1} we get
\[
\ \frac{1}{2}\Delta_{\tilde{f}}S\leq-\frac{m+n-1}{mn}(S-n\lambda)(S-\frac{n(n-1)}{m+n-1}\lambda)\leq0.
\]
Since $M$ is compact, $S$ must be constant. Hence $S=S_{*}=n\lambda$. Substituting in \eqref{eq_Scal1} we obtain that $Ric=\frac{1}{n}Sg_M$ and thus that $M$ is Einstein.

Now we show that $S_*>\frac{n(n-1)}{m+n-1}\lambda$. Indeed, suppose that $S$ attains its minimum $\frac{n(n-1)}{m+n-1}\lambda$. Since the non-negative function $v(x)=S(x)-\frac{n(n-1)}{m+n-1}\lambda$ satisfies
\begin{equation*}
\frac{1}{2}\Delta_{\tilde{f}}v\leq-\frac{m+n-1}{mn}\,v^2+\lambda v\leq+\lambda v,
\end{equation*}
%i.e,
%\begin{equation*}
%\frac{1}{2}\Delta_{\tilde{f}}v+\lambda v\leq 0,
%\end{equation*}
and $v$ attains its minimum $v(x_0)=0$, it follows from the minimum principle, (see p. 35 in \cite{GT}), that $v$ vanishes identically. Hence $S\equiv \frac{n(n-1)}{m+n-1}\lambda$ is constant and, substituting in \eqref{eq_Scal1}, we get that $M$ is Einstein with
\[
\ Ric=\frac{n-1}{m+n-1}\lambda g_M.
\]
Using this information into \eqref{QE} we obtain that
\begin{equation*}
\ Hess(f)=\frac{1}{m}|\nabla f|^2+\frac{m}{m+n-1}\lambda g_M>0. 
\end{equation*}
But this is clearly impossible because $M$ is compact.
\smallskip

\noindent (b) Assume $\lambda=0$. From  (\ref{ineq_S*}) we conclude that $S_*=0$.
Note that, according to (\ref{eq_Scal1}), $\Delta_{\tilde{f}}S\leq0$. Therefore, by the minimum principle, either $S(x)>0$ on $M$ or $S(x)\equiv0$. In this latter case,  substituting in (\ref{eq_Scal1}), we obtain that $M$ is Ricci flat and the $m$-quasi Einstein equation reads $Hess(f)-\frac{1}{m}df \otimes df = 0.$ Therefore, either $f$ is constant and $M$ is Einstein, or the non constant function $u=e^{-\frac{f}{m}}$ satisfies $\rm{Hess}(u)=0$. A Cheeger-Gromoll type argument now shows that $M$ is isometric to the Riemannian product $\mathbb{R}\times \Sigma$ along the  Ricci flat, totally geodesic hypersurface $\Sigma$ of $M$.

\noindent (c) Assume $\lambda<0$. From (\ref{ineq_S*}) we deduce that $n\lambda\leq S_{*}\leq\frac{n(n-1)}{m+n-1}\lambda$. Suppose that $S(x_0)=n\lambda<0$ for some $x_0\in M$. Since the non-negative function $w(x)=S(x)-n\lambda$ satisfies
\begin{equation*}
\frac{1}{2}\Delta_{\tilde{f}}w\leq-\frac{m+n-1}{mn}\,w^2-\lambda w\leq-\lambda w,
\end{equation*}
%i.e,
%\begin{equation*}
%\frac{1}{2}\Delta_{\tilde{f}}v+\lambda v\leq 0,
%\end{equation*}
and $w$ attains its minimum $w(x_0)=0$, it follows from the minimum principle that $w$ vanishes identically. Hence $S\equiv n\lambda$ is constant and substituting in \eqref{eq_Scal1} we get that $M$ is Einstein.
%Similarly, assume $S_*=\frac{n(n-1)}{m+n-1}\lambda$. Then the nonnegative function $v(x)=S(x)-\frac{n(n-1)}{m+n-1}\lambda$ satisfies $\Delta_{\tilde{f}}v\leq0$, and by the minimum principle, either $v(x)>0$ on $M$ or $v(x)\equiv0$. In the latter case it follows from \eqref{eq_Scal1} that $Ric=\frac{1}{n}Sg_M=\frac{n-1}{m+n-1}\lambda g_M$ and $M$ is Einstein.\smallskip
\end{proof}

\section{Triviality results under $L^{p}$ conditions}
It is well known that steady or expanding compact Ricci solitons are necessarily trivial. The same result is proven in \cite{KK} for quasi-Einstein metrics on compact manifolds with finite $m$. For Ricci solitons a generalization to the complete non-compact setting is obtained in \cite{PRiS}.

In this section using the scalar curvature estimates of Theorem \ref{scal_est}, we get triviality for (not necessarily compact) quasi-Einstein metrics with $m<+\infty$,  $\lambda\leq0$.
\begin{theorem}\label{Triv}
Let $(M^n,g_M,e^{-f}d\rm{vol})$ be a geodesically complete non-compact $m$-quasi-Einstein manifold,  $1\leq m<+\infty$. If the quasi-Einstein constant $\lambda$ is non-positive and $f$ satisfies, for some $1<p<+\infty$,
\begin{equation}\label{p_int}
f\in L^p(M, e^{-\frac{f}{m}}d\rm{vol}),
\end{equation}
and $\inf_M f=f_*>-\infty$, then either $f\equiv const\leq0$ and $M$ is Einstein or $f>0$. 
\end{theorem}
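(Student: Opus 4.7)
The plan is to exploit the $L^p$-Liouville-type results of \cite{PRiS} for a suitably chosen weighted Laplacian. The natural choice of drift is $\phi=f/m$, so that the measure $e^{-\phi}d\mathrm{vol}=e^{-f/m}d\mathrm{vol}$ appearing in the integrability hypothesis coincides with the one for which the diffusion operator $\Delta_\phi u:=\Delta u-\frac{1}{m}\langle\nabla f,\nabla u\rangle$ is symmetric.

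Tracing the quasi-Einstein equation $Ric_f^m=\lambda g_M$ gives $S+\Delta f-\frac{1}{m}|\nabla f|^2=n\lambda$, hence
\begin{equation*}
\Delta_\phi f = n\lambda - S.
\end{equation*}
By the scalar curvature estimates of Theorem \ref{scal_est} (namely $S\geq 0=n\lambda$ in case (b) and $S\geq n\lambda$ in case (c); the weaker bound $S\geq n\lambda$ for $m=1$ follows from exactly the same argument used to prove Theorem \ref{scal_est}, since the traceless Ricci contribution in \eqref{eq_Scal1} simply drops out), one deduces $\Delta_\phi f\leq 0$, i.e. $f$ is $\phi$-superharmonic.

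Set $h:=f^-=\max\{-f,0\}$. Since $-f$ is $\phi$-subharmonic and smooth, a standard convex approximation of $t\mapsto t^+$ shows that $h$ is a non-negative weak subsolution of $\Delta_\phi$. Moreover $h$ is bounded, $0\leq h\leq -f_*<+\infty$ (if $f_*>0$ then $h\equiv 0$ and $f>0$ trivially), while the integrability hypothesis gives
\begin{equation*}
\int_M h^p\, e^{-f/m}d\mathrm{vol} \leq \int_M |f|^p\, e^{-f/m}d\mathrm{vol} < +\infty.
\end{equation*}
Thanks to Qian's weighted volume estimates, exactly as in the proof of Theorem \ref{scal_est}, the $L^p$-Liouville theorem for non-negative weak subsolutions of $\Delta_\phi$ from \cite{PRiS} is available on the complete weighted manifold $(M,g_M,e^{-f/m}d\mathrm{vol})$, and it forces $h$ to be constant.

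Two alternatives remain. If $h\equiv c>0$, then $f\equiv -c<0$ is constant, the quasi-Einstein equation \eqref{QE} reduces to $Ric=\lambda g_M$, and $M$ is Einstein. If $h\equiv 0$, then $f\geq 0$ on $M$: either $f$ stays strictly positive and we are done, or $f(x_0)=0$ for some $x_0\in M$, in which case the strong minimum principle applied to the non-negative $\phi$-superharmonic function $f$ forces $f\equiv 0$, once again the Einstein case. The only delicate step, in my view, is checking that the Liouville theorem of \cite{PRiS} is applicable with the drift $\phi=f/m$ rather than the $\tilde f=(m+2)f/m$ used in Section 3; this should amount to a routine variant of the same weighted volume comparison, since the two drifts differ only by a multiplicative constant depending on $m$.
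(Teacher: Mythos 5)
Your proposal is correct and follows essentially the same route as the paper: trace the quasi-Einstein equation with drift $\hat{f}=f/m$ to obtain $\Delta_{\hat{f}}f=n\lambda-S\leq0$ via the scalar curvature estimates of Theorem \ref{scal_est}, then apply the $L^p$-Liouville theorem (Theorem 14 of \cite{PRiS}) to $f_-\in L^p(M,e^{-f/m}d\mathrm{vol})$ and conclude. Your extra care at the borderline cases --- $m=1$, where the traceless Ricci term in \eqref{eq_Scal1} drops out, and $f(x_0)=0$, which you settle by the strong minimum principle --- fills in details the paper leaves implicit, but it is not a departure in method.
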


\begin{proof}(of Theorem \ref{Triv})
Tracing \eqref{QE} and letting $\hat{f}=\frac{1}{m}f$ we have that
\begin{equation}\label{triv1}
\Delta_{\hat{f}}f=n\lambda-S.
\end{equation}
Since $\lambda\leq0$ and $f_*>-\infty$, from \eqref{Scal_lambda-} of Theorem \ref{scal_est} we obtain that $\Delta_{\hat{f}}f\leq 0$. Applying Theorem 14 in \cite{PRiS} to $f_{-}=\max\{-f,0\}\in L^p(M,e^{-\hat{f}}d\rm{vol})$,  gives that $f_{-}$ is constant. Hence, if there exists a point $x_0\in M$ such that $f(x_0)\leq0$ then $f\equiv f(x_0)\leq0$.
\end{proof}
\begin{remark}\label{Dezhong}
\rm{From the proof it follows that if either $M$ is compact or $f$ attains its absolute minimum then $f\equiv const$. Actually, it was pointed out to us by Dezhong Chen that the same conclusion holds if we merely assume that $f$ attains a local minimum at some point $x_0\in M$. Indeed the following proposition holds.}
\end{remark}

\begin{proposition}\label{rmk_min}
Let $(M, g_M, e^{-f} d\rm{vol})$ be a geodesically complete non-compact $m$-quasi-Einstein manifold, $1< m<+\infty$. If the quasi-Einstein constant $\lambda$ is non positive and $f$ satisfies $f_*>-\infty$, then any local minimum of $f$ is actually an absolute minimum.
\end{proposition}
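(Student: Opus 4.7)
The plan is to combine the strong minimum principle for $f$ with the scalar--curvature rigidity of Theorem~\ref{scal_est}, and then to conclude via a parallel vector field argument. Tracing the quasi-Einstein equation $Ric_f^m=\lambda g_M$ gives $\Delta_{\hat f}f=n\lambda-S$, where $\hat f=f/m$ (as already used in the proof of Theorem~\ref{Triv}). Since $\lambda\le 0$ and $f_*>-\infty$, Theorem~\ref{scal_est}(b)--(c) yields $S\ge n\lambda$ on $M$, so $f$ is $\Delta_{\hat f}$-superharmonic.

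Setting $m_0=f(x_0)$, the local minimum hypothesis provides a geodesic ball $B_r(x_0)$ on which $f\ge m_0$, with equality at the interior point $x_0$. The classical strong minimum principle for the linear elliptic operator $\Delta_{\hat f}$ (no zero-th order term) then forces $f\equiv m_0$ on $B_r(x_0)$; hence $\Delta_{\hat f}f\equiv 0$, and consequently $S\equiv n\lambda$ on this ball.

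Now the rigidity parts of Theorem~\ref{scal_est} propagate this equality: part (c) (when $\lambda<0$) says $S>n\lambda$ unless $M$ is Einstein, while part (b) (when $\lambda=0$) forces $S\equiv 0$ on all of $M$ and, via \eqref{eq_Scal1}, $Ric=0$. Either way one obtains $Ric=\lambda g_M$ globally. The quasi-Einstein equation then reduces to $Hess(f)-\frac{1}{m}df\otimes df=0$, which is equivalent to $Hess(u)=0$ for $u=e^{-f/m}$. Thus $\nabla u$ is a parallel vector field on the connected manifold $M$, vanishing on $B_r(x_0)$ (where $u$ is constant); since parallel fields have constant norm, $\nabla u\equiv 0$ on $M$. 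Therefore $u$, and hence $f$, is constant, and in particular $f(x_0)=f_*$.

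The principal obstacle I anticipate is the propagation from local to global constancy. A direct open--closed argument on $\{f=m_0\}$ is inconclusive, because $f$ could a priori drop below $m_0$ away from $x_0$ and one has no control over the radii of the ``flat'' balls produced by the strong minimum principle. The device above bypasses this by routing the information through the scalar curvature, where the global rigidity of Theorem~\ref{scal_est} supplies the propagation for free. In the borderline case $\lambda=0$, one must additionally rule out the Cheeger--Gromoll type alternative $M=\mathbb{R}\times\Sigma$ of Theorem~\ref{scal_est}(b); this is automatic in the present setting, because $\nabla u$ already vanishes on the open set $B_r(x_0)$.
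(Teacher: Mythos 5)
Your proof is correct, and its engine is the same as the paper's: route the local-minimum information through the traced equation \eqref{triv1} and the scalar curvature rigidity of Theorem \ref{scal_est} to force $S\equiv n\lambda$, $Ric=\lambda g_M$, and hence the reduced equation \eqref{split}, $Hess(f)=\frac{1}{m}\,df\otimes df$. You differ at the two ends, and the differences are worth noting. At the start, the paper does not need the strong minimum principle on a ball: since $x_0$ is a local minimum, $\nabla f(x_0)=0$ and $\Delta f(x_0)\geq 0$, so simply evaluating \eqref{triv1} at $x_0$ gives $S(x_0)\leq n\lambda$, which combined with $S\geq n\lambda$ already triggers the rigidity; your minimum-principle step is valid but strictly heavier than necessary. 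At the end, the paper stops at \eqref{split}: positive semi-definiteness of $Hess(f)$ together with $\nabla f(x_0)=0$ makes $f$ convex along geodesics emanating from $x_0$, so the local minimum is global --- which is all the proposition claims. Your finishing move (rewriting \eqref{split} as $Hess(u)=0$ for $u=e^{-f/m}$, noting $\nabla u$ is parallel and vanishes on the flat ball, hence vanishes identically) is also correct and proves strictly more, namely $f\equiv const$; but that stronger statement is exactly the content of Remark \ref{Dezhong}, which in the paper is obtained by combining the proposition with the argument of Theorem \ref{Triv}. So your single argument subsumes both the proposition and the remark, and it correctly handles the borderline $\lambda=0$ splitting alternative of Theorem \ref{scal_est}(b); the paper's version buys economy, reaching the stated conclusion from the pointwise second-derivative test plus elementary convexity, with no appeal to parallel fields.
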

\begin{proof}
Assume that $f$ attains a local minimum $x_0\in M$. Evaluating \eqref{triv1} at $x_0$, we get
\[
\ S(x_0)\leq n\lambda.
\]
Hence, since $\lambda\leq 0$, by Theorem \ref{scal_est}, $M$ is Einstein and $S$ is identically $n\lambda$. Thus the quasi-Einstein equation \eqref{QE} reads
\begin{equation}\label{split}
\ Hess(f)=\frac{1}{m} df \otimes df.
\end{equation}
In particular $Hess(f)$ is positive semi-definite on $M$ and this implies the thesis.
\end{proof}

\section{Proof of the main theorem}

Putting together the results of the previous sections we easily obtain a proof of Theorem \ref{main_th}.

Indeed, according to Theorem \ref{CSW_warped}, $M$ is quasi-Einstein. Statement (a) follows immediately from Remark \ref{Dezhong} and Proposition \ref{rmk_min}. In case (b), since $(n+m)\lambda=\text{ }^{N}S\leq 0$, we get by Theorem \ref{Triv} that $f$, and so $u$, is a constant function.

\section{Other triviality results}
Another triviality result for Einstein warped products has been obtained by J. Case in \cite{C}. 

\begin{theorem}\label{Case}(Case)
Let $N^{n+m}=M^{n}\times_{u}F^{m}$ be a complete warped product with warping function $u(x)=e^{-\frac{f\left(  x\right)  }{m}}$, scalar curvature $^NS\geq 0$ and complete Einstein fibre $F$.
Then $N$ is simply a Riemannian product provided the base manifold $M$ is complete and the scalar curvature of $F$ satisfies $^{F}S\leq 0$.
\end{theorem}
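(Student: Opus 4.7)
My plan is to reformulate the problem on the base manifold using the dictionary of Theorem~\ref{CSW_warped}. The Einstein condition on $N = M\times_u F$ with $^{N}S\geq 0$ and Einstein fibre $F$ with $^{F}S\leq 0$ makes $(M, g_M, e^{-f}\,d\mathrm{vol})$ into an $m$-quasi-Einstein manifold with constant $\lambda = {}^{N}S/(n+m)\geq 0$ and forces \eqref{EinsteinConstants} to hold with $\mu = {}^{F}S/m\leq 0$. Substituting $u = e^{-f/m}$ into \eqref{EinsteinConstants} one obtains, after a short algebraic manipulation, the PDE
\begin{equation*}
\Delta(u^{m}) + m\lambda\, u^{m} = m\mu\, u^{m-2}.
\end{equation*}
Since $\lambda\geq 0$, $\mu\leq 0$, and $u>0$, the right-hand side is non-positive, so $u^{m}$ is a positive superharmonic function on $M$, which is the starting point for everything that follows.

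Next I would dispose of the case $\lambda>0$: Qian's version of Myers' theorem forces $M$ to be compact, and integrating the PDE over $M$ yields $m\lambda\int_{M}u^{m}\,d\mathrm{vol} = m\mu\int_{M}u^{m-2}\,d\mathrm{vol}$, whose left-hand side is strictly positive while the right-hand side is non-positive; contradiction. Thus $\lambda = 0$. In the compact case, a further integration forces $\mu = 0$ too, so $u^{m}$ is harmonic on the compact $M$ and therefore constant, giving the conclusion. This leaves the non-compact case with $\lambda = 0$.

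With $\lambda = 0$ and $M$ non-compact, the PDE reads $\Delta(u^{m}) = m\mu u^{m-2}\leq 0$. Tracing the base-component of the Einstein equation on $N$, namely $^{M}\mathrm{Ric} = \frac{m}{u}\mathrm{Hess}(u)$, gives $^{M}S = m\Delta u/u$, and from $u\,\Delta u + (m-1)|\nabla u|^{2} = \mu\leq 0$ one reads $\Delta u\leq 0$, hence $^{M}S\leq 0$ on $M$. Assuming $f_{*} = \inf_{M} f > -\infty$, Theorem~\ref{scal_est}(b) now applies, yielding $S_{*} = 0$; combined with $^{M}S\leq 0$ this forces $^{M}S\equiv 0$, and the dichotomy of that theorem gives either $f$ constant (done) or $M$ isometric to $\mathbb{R}\times\Sigma$ with $u$ affine along the $\mathbb{R}$-factor. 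The second possibility is ruled out by the positivity of $u$: an affine function on $\mathbb{R}$ vanishing nowhere must be constant.

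The hard part will be the case $f_{*} = -\infty$, which is not excluded by the hypotheses. To handle it I would combine the weak maximum principle at infinity for the $\tilde f$-Laplacian on $M$ (available without an a priori bound on $f_{*}$ because $\mathrm{Ric}_{f}^{m} = 0$ gives Qian-type polynomial weighted-volume growth, as in the proof of Theorem~\ref{scal_est}) with a Liouville-type theorem for smooth metric measure spaces with $\mathrm{Ric}_{f}\geq 0$; this last curvature condition is automatic here since $\lambda = 0$ forces $\mathrm{Ric}_{f} = \frac{1}{m}df\otimes df\geq 0$. Making the weighted-volume machinery effective without the a priori control $f_{*}>-\infty$ is the technically delicate point of the argument.
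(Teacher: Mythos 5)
There is a genuine gap, and you have in fact located it yourself: your argument is complete only under the extra hypothesis $f_*=\inf_M f>-\infty$, which is not among the hypotheses of Theorem \ref{Case}. (For context: the paper does not prove this theorem at all — it is quoted from \cite{C} — so your proposal must stand on its own.) Everything you do before the last paragraph is correct: reading the statement, as one must, as concerning an \emph{Einstein} warped product, Theorem \ref{CSW_warped} gives $\lambda\geq 0$, $\mu\leq 0$, the PDE $\Delta(u^m)+m\lambda u^m=m\mu u^{m-2}$ follows from \eqref{EinsteinConstants}, integration over $M$ kills $\lambda>0$ (after Qian--Myers compactness) and settles the compact $\lambda=0$ case, and in the noncompact case with $f_*>-\infty$ the identity $u\Delta u+(m-1)|\nabla u|^2=\mu$ gives $^MS\leq 0$, so Theorem \ref{scal_est}(b) (note: it needs $m>1$) forces $S\equiv 0$, hence $\mathrm{Hess}(u)=0$, and positivity of $u$ rules out the split alternative. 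But this establishes a bounded-$f$ statement of the same nature as the paper's own Theorems \ref{Triv} and \ref{main_th}; the entire content of Case's theorem is precisely that no lower bound on $f$ (equivalently, no upper bound on $u$) is assumed.

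Your sketch for the case $f_*=-\infty$ is not merely ``technically delicate''; its two pillars are unsubstantiated. First, the weak maximum principle for $\Delta_{\tilde f}$ used in the proof of Theorem \ref{scal_est} comes from Qian's estimates \cite{Q}, which control $\mathrm{vol}_f(B_r)$, i.e.\ the measure $e^{-f}d\mathrm{vol}$, under bounds on $Ric_f^m$; the passage to the measure $e^{-\tilde f}d\mathrm{vol}=e^{-\frac{2}{m}f}e^{-f}d\mathrm{vol}$ goes exactly through $\mathrm{vol}_{\tilde f}(B_r)\leq e^{-\frac{2}{m}f_*}\mathrm{vol}_f(B_r)$ — that is, through the very bound $f_*>-\infty$ you are trying to drop — so ``Qian-type weighted volume growth'' for the $\tilde f$-measure is not automatic. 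Second, the appeal to a ``Liouville-type theorem for $Ric_f\geq 0$'' has no identified target: $u$ is not $\Delta_f$-superharmonic (by \eqref{EinstConst_u} with $\lambda=0$ one has $\Delta_f u=(\mu+|\nabla u|^2)/u$, which has no sign), while $u^m$ is superharmonic only for the \emph{unweighted} Laplacian, whose positive-superharmonic Liouville property would require $Ric\geq 0$ as in \cite{Y} or parabolicity — neither available here, since $^M Ric=\frac{m}{u}\mathrm{Hess}(u)$ has no sign and $Ric_f\geq\frac{1}{m}df\otimes df\geq 0$ does not imply parabolicity (consider $\mathbb{R}^n$, $n\geq 3$, $f$ constant). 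Case's actual argument in \cite{C} runs through a different mechanism that never uses $f_*>-\infty$; as written, your proposal proves only the weaker bounded-$f$ version.
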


In the following theorem we obtain the same conclusion in case the fibers have non-negative scalar curvature, up to assume an integrability condition on the warping function $u$. We observe that non-trivial examples with $^N S\leq 0$ and $^F S\geq 0$ are constructed in (\cite{B}, 9.118). Thus the integrabilty assumption is necessary.
\begin{theorem}\label{RicFlatLp}
Let $N^{n+m}=M^{n}\times_{u}F^{m}$ be a complete Einstein warped product with warping function $u(x)=e^{-\frac{f\left(  x\right)  }{m}}$, scalar curvature $^NS\leq 0$, and complete Einstein fibre $F$.
Then $N$ is simply a Riemannian product provided the base manifold $M$ is complete, the warping function satisfies $\int_Me^{-(\frac{p+m}{m})f}d\rm{vol}<+\infty$ for some $1<p<+\infty$, and the scalar curvature of $F$ satisfies $^{F}S\geq 0$. In this case $M$ and $F$ are Ricci flat and $M$ is compact.
\end{theorem}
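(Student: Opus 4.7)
The plan is to mirror the strategy of Theorem~\ref{Triv}, but apply the $L^{p}$-Liouville machinery not to $f$ itself, rather to the auxiliary function $w:=u^{m}=e^{-f}$, which under the present sign hypotheses will turn out to be subharmonic for the ordinary Laplacian on $M$. First I would invoke Theorem~\ref{CSW_warped} to cast the statement in the quasi-Einstein language: $(M,g_{M},e^{-f}\,d\mathrm{vol})$ is $m$-quasi-Einstein with $\lambda={}^{N}S/(n+m)\le 0$, the fibre is Einstein with $\mu={}^{F}S/m\ge 0$, and $f$ satisfies \eqref{EinsteinConstants}. Combining the elementary identity $\Delta e^{-f}=-e^{-f}(\Delta f-|\nabla f|^{2})$ with \eqref{EinsteinConstants}, a direct computation then yields
\[
\Delta w \;=\; -m\lambda\,w \;+\; m\mu\,w^{(m-2)/m}.
\]
Since $\lambda\le 0$, $\mu\ge 0$, and $w>0$, both summands on the right-hand side are non-negative, so $w$ is a non-negative subharmonic function on the complete manifold $M$.

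The integrability hypothesis of the theorem reads exactly
\[
\int_{M} w^{(p+m)/m}\,d\mathrm{vol}\;<\;+\infty,
\]
i.e.\ $w\in L^{q}(M,d\mathrm{vol})$ with exponent $q=(p+m)/m>1$. At this point I would apply an $L^{p}$-Liouville theorem for non-negative subharmonic functions on complete manifolds — either Yau's classical result, or the unweighted specialization of Theorem~14 in \cite{PRiS} — to conclude that $w$ is constant. This forces $f$, and hence the warping function $u$, to be constant, so $N$ is a Riemannian product.

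It then remains to extract the extra rigidity. With $f$ constant, equation \eqref{QE} collapses to $\mathrm{Ric}=\lambda g_{M}$, while the displayed formula for $\Delta w$ reduces to $\lambda w=\mu w^{(m-2)/m}$; since $w$ is a positive constant and the signs of $\lambda$ and $\mu$ are opposite, this forces $\lambda=\mu=0$, and both $M$ and $F$ are Ricci flat. Finally, the integrability hypothesis becomes $\mathrm{vol}(M)<+\infty$, and the Calabi--Yau theorem — a complete non-compact manifold with $\mathrm{Ric}\ge 0$ has infinite volume — then forces $M$ to be compact. The only non-routine point in the whole argument is the choice of substitution $w=e^{-f}$: it is the unique power of $u$ for which the two sign hypotheses combine into subharmonicity with respect to the ordinary Laplacian, and the exponent $(p+m)/m$ appearing in the integrability assumption is then exactly the Lebesgue exponent $>1$ required by the Liouville theorem. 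Once this alignment is noticed, every remaining step is either a short calculation or the quotation of a known result, and I do not anticipate any serious obstacle.
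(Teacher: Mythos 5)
Your proof is correct, and it takes a genuinely different route from the paper's, even though both hinge on \eqref{EinsteinConstants} and an $L^{p}$-Liouville theorem. The paper stays entirely in the weighted setting: it computes the $f$-Laplacian of the warping function itself, obtaining $\Delta_f u=\mu u^{-1}-\lambda u+\frac{u}{m^2}|\nabla f|^2$ (equation \eqref{EinstConst_u}), notes that $\Delta_f u\geq 0$ under the sign hypotheses, observes that the integrability assumption says exactly $0<u\in L^{p}(M,e^{-f}d\mathrm{vol})$, and invokes the weighted $L^{p}$-Liouville theorem for the $f$-Laplacian (Theorem 14 of \cite{PRiS}). You instead conjugate the weight away: the substitution $w=e^{-f}=u^{m}$ turns \eqref{EinsteinConstants} into the \emph{exact} identity $\Delta w=-m\lambda w+m\mu w^{(m-2)/m}$ — the gradient term cancels, as you correctly computed — so $w$ is an ordinary positive subharmonic function lying in $L^{(p+m)/m}(M,d\mathrm{vol})$ with exponent $>1$, and Yau's classical Liouville theorem (reference \cite{Y} of the paper, cited there only for the volume/compactness step) applies. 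Your route is more elementary and self-contained, requiring no weighted Liouville machinery; the paper's route keeps formula \eqref{EinstConst_u} in play, which it reuses immediately afterwards in the proof of Theorem \ref{Non-Exist}. The endgames also differ slightly but equivalently: the paper rescales the fibre metric so that $u\equiv 1$ and argues that an Einstein Riemannian product has both factors Einstein with the same constant, so the opposite sign hypotheses on $^{N}S$ and $^{F}S$ force Ricci-flatness of both, whereas you read $\lambda w^{2/m}=\mu$, hence $\lambda=\mu=0$, directly off your reduced identity with $w$ constant. Both proofs finish identically: constancy of $f$ turns the integrability condition into $\mathrm{vol}(M)<+\infty$, and the Calabi--Yau volume growth theorem then forces the Ricci-flat $M$ to be compact.
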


Combining Theorem \ref{Case} and Theorem \ref{RicFlatLp} immediately gives the following

\begin{corollary}
Let $N$ be a complete Ricci flat warped product with complete Einstein fibre $F$ and warping function $u(x)=e^{-\frac{f\left(  x\right)  }{m}}$ satisfying $u\in L^p(M,e^{-f}d\rm{vol})$, for some $1<p<+\infty$. Then $N$ is simply a Riemannian product.
\end{corollary}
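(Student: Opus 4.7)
The plan is to exploit the quasi-Einstein reformulation of Theorem \ref{CSW_warped} and reduce the triviality of $N$ to an $L^{q}$ Liouville theorem for a suitable non-negative subharmonic function on $M$, in the same spirit as the $L^{p}$-arguments used earlier in the paper.

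By Theorem \ref{CSW_warped}, the weighted manifold $(M,g_{M},e^{-f}d\mathrm{vol})$ is $m$-quasi-Einstein with some constant $\lambda$, and the relations $^{N}S=(n+m)\lambda\leq 0$ and $^{F}S=m\mu\geq 0$ (where $\mu$ is the Einstein constant of the fibre appearing in \eqref{EinsteinConstants}) give $\lambda\leq 0$ and $\mu\geq 0$. I then set $w:=e^{-f}=u^{m}>0$. Using $\nabla w=-w\nabla f$ and $\Delta w=w(|\nabla f|^{2}-\Delta f)$, a direct computation from \eqref{EinsteinConstants} yields
\begin{equation*}
\Delta w=-m\lambda\,w+m\mu\,w^{(m-2)/m}.
\end{equation*}
Since $\lambda\leq 0$, $\mu\geq 0$ and $w>0$, both summands are non-negative, so $w$ is subharmonic on the complete manifold $M$. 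Moreover the integrability hypothesis rewrites exactly as
\[
\int_{M}w^{(p+m)/m}\,d\mathrm{vol}=\int_{M}e^{-\frac{p+m}{m}f}\,d\mathrm{vol}<+\infty,
\]
so that $w\in L^{q}(M,d\mathrm{vol})$ with exponent $q:=(p+m)/m>1$.

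At this point I would invoke the $L^{q}$ Liouville theorem for non-negative subharmonic functions on complete Riemannian manifolds (equivalently, Theorem 14 of \cite{PRiS} applied with trivial weight) to conclude that $w$ is constant; hence $f$ and $u$ are constant and $N$ splits as a Riemannian product. Substituting $\Delta w=0$ together with the constant positive value of $w$ into the displayed identity forces $\lambda=\mu=0$, so both $M$ and $F$ are Ricci flat. Finally, constancy of $f$ converts the integrability hypothesis into $\mathrm{vol}(M)<+\infty$; since a complete non-compact Ricci flat manifold has at least linear volume growth by Calabi--Yau / Bishop--Gromov, this forces $M$ to be compact.

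\emph{The main obstacle} is to identify the right non-negative subharmonic quantity: neither $f$ nor $u$ itself carries a useful sign under our hypotheses, while the choice $w=e^{-f}=u^{m}$ simultaneously matches the integrand in the assumed integrability and exploits jointly the sign conditions $^{N}S\leq 0$ and $^{F}S\geq 0$. Once this observation is made, the remainder of the proof is a single application of an $L^{q}$ Liouville theorem together with a classical volume-growth estimate.
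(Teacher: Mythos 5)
Your argument has a genuine gap at its very first step: you assert that $^{F}S=m\mu\geq 0$, but this is not a hypothesis of the corollary. The corollary assumes only that $N$ is Ricci flat (so $\lambda=0$ and $^{N}S=0$) and that the fibre $F$ is Einstein; nothing forces the Einstein constant $\mu$ of the fibre to be non-negative. If $\mu<0$, your (correct) identity $\Delta w=-m\lambda w+m\mu w^{(m-2)/m}$ makes $w=e^{-f}$ a positive \emph{super}harmonic function, and the $L^{q}$ Liouville theorem for non-negative subharmonic functions no longer applies; worse, positive superharmonic functions lying in $L^{q}$, $q>1$, do exist on complete non-parabolic manifolds (e.g. suitable smoothings of $\min\{1,|x|^{2-n}\}$ on $\mathbb{R}^{n}$, $n\geq3$), so no purely integrability-based Liouville argument can close this case. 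Eliminating $\mu<0$ is exactly where the paper invokes Case's result: since $F$ is Einstein, $^{F}S=m\mu$ is a constant of definite sign, and the paper's proof of the corollary is the dichotomy ``combine Theorem \ref{Case} and Theorem \ref{RicFlatLp}'' — if $^{F}S\leq 0$, Theorem \ref{Case} applies (its hypothesis $^{N}S\geq0$ holds because $^{N}S=0$) and gives the splitting with no integrability assumption at all; if $^{F}S\geq 0$, Theorem \ref{RicFlatLp} applies (its hypothesis $^{N}S\leq0$ also holds, and $u\in L^{p}(M,e^{-f}d\mathrm{vol})$ is precisely $\int_{M}e^{-\frac{p+m}{m}f}d\mathrm{vol}<+\infty$). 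Your proof covers only the second horn.

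Within that second horn your argument is sound and is essentially the paper's proof of Theorem \ref{RicFlatLp} in unweighted disguise: the paper derives $\Delta_{f}u\geq0$ from \eqref{EinstConst_u} and applies the weighted $L^{p}$ Liouville theorem (Theorem 14 of \cite{PRiS}) to $u$, whereas you pass to $w=u^{m}=e^{-f}$, for which the same sign information yields ordinary subharmonicity and $w\in L^{(p+m)/m}(M,d\mathrm{vol})$, so Yau's classical theorem suffices — a mild simplification. Your concluding steps ($\lambda=\mu=0$ from $\Delta w=0$ and $w>0$ constant, finite volume from constancy of $f$, compactness via Calabi--Yau) likewise match the paper. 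To repair the proof, add the missing case: when $\mu\leq0$, quote Theorem \ref{Case} directly, observing that Ricci flatness of $N$ makes both sign hypotheses $^{N}S\geq0$ and $^{N}S\leq0$ available simultaneously.
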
 
\begin{proof}(of Theorem \ref{RicFlatLp})
Just observe that computing the $f$-laplacian of $u$ and using \eqref{EinsteinConstants} one obtains the following equation
\begin{equation}\label{EinstConst_u}
\ \Delta_f u=\mu u^{-1}-\lambda u+ \frac{u}{m^2}|\nabla f|^2.
\end{equation}
Thus, in our assumptions, we obtain that $\Delta_fu\geq0$. Since $0<u\in L^p(M,e^{-f}d\rm{vol})$, by Theorem 14 in \cite{PRiS}, we obtain the constancy of $u$. Up to a rescaling of the metric of $F$ we can suppose $u=1$.

Now, since the Riemannian product $M\times F$ is Einstein, both $M$ and $F$ are Einstein manifolds with the same Einstein constant. In particular, $^MS$ and $^FS$ have the same sign. By our assumption on the signs of $^NS$ and $^FS$ we thus obtain that both $M$ and $F$ are Ricci flat. Finally, since $u$ (and thus $f$) is constant, from the integrability condition we obtain that $vol(M)<+\infty$. Thus, by a result of Calabi-Yau, [Y], we obtain that $M$ must be compact.
\end{proof}

We end this section with a non-existence result. Recall that by the volume estimates in \cite{Q} and Theorem 9 in \cite{PRiS} the weak maximum principle for the $f$-laplacian holds on $(M, g_M, e^{-f}d\rm{vol})$ provided $Ric_f^m=\lambda g_M$ for some $\lambda\in\mathbb{R}$, $m<+\infty$.
\begin{theorem}\label{Non-Exist}
There is no complete Einstein warped product $N=M^n\times_uF^m$ with warping function $u=e^{-\frac{f}{m}}\in L^\infty(M)$, scalar curvature $^NS<0$ and Einstein fibre $F$ with $^FS\geq 0$.
\end{theorem}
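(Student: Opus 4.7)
The plan is to argue by contradiction using equation~\eqref{EinstConst_u} together with the weak maximum principle at infinity for the $f$-Laplacian, which is available here since $Ric_f^m=\lambda g_M$ (as recalled immediately before the statement).

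Assume such an $N$ exists. By Theorem~\ref{CSW_warped}, $(M,g_M,e^{-f}d\mathrm{vol})$ is $m$-quasi-Einstein with $\lambda={}^{N}S/(n+m)<0$ and the Einstein constant of the fibre satisfies $\mu={}^{F}S/m\geq 0$. Writing~\eqref{EinstConst_u} for the warping function $u=e^{-f/m}>0$,
\[
\Delta_f u \;=\; \mu u^{-1} - \lambda u + \frac{u}{m^2}|\nabla f|^2,
\]
the two sign assumptions make every term on the right non-negative. In particular one reads off the pointwise bound $\Delta_f u \geq -\lambda\, u = |\lambda|\, u$ on $M$.

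Next I would invoke the weak maximum principle at infinity for $\Delta_f$: since $u\in L^\infty(M)$ is bounded above, there exists a sequence $\{x_k\}\subset M$ such that $u(x_k)\to u^*:=\sup_M u$ and $\Delta_f u(x_k)<1/k$. Evaluating the inequality $\Delta_f u \geq |\lambda|\,u$ at $x_k$ and passing to the limit yields $0 \geq |\lambda|\,u^*$, which contradicts $u>0$ (hence $u^*>0$) and $\lambda<0$.

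The argument is essentially routine once one notices that the sign hypotheses force the right-hand side of~\eqref{EinstConst_u} to be a positive multiple of $u$; the only point worth verifying is that the $L^\infty$-boundedness of $u$, rather than of $f$, is precisely what is required to apply the weak maximum principle at infinity, and this is immediate from the hypothesis. Accordingly, I do not anticipate a substantive obstacle, and indeed the $|\nabla f|^2$-term in~\eqref{EinstConst_u} is not even used.
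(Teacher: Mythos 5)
Your proposal is correct and follows essentially the same route as the paper: both drop the non-negative terms $\mu u^{-1}$ and $\frac{u}{m^2}|\nabla f|^2$ in \eqref{EinstConst_u} to get $\Delta_f u\geq -\lambda u$, then apply the weak maximum principle at infinity for $\Delta_f$ (valid via Qian's volume estimates and Theorem 9 of \cite{PRiS}) along a maximizing sequence for the bounded function $u$ to force $\lambda u^*\geq 0$, contradicting $u^*>0$ and $\lambda<0$. No gaps; your remark that the $|\nabla f|^2$-term is unused matches the paper's argument exactly.
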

\begin{proof}
Since $m\mu= ^FS\geq 0$, from \eqref{EinstConst_u}, we have that 
\begin{equation}\label{Linfty}
\Delta_fu\geq-u\lambda.
\end{equation}
Since, by assumption, $u$ satisfies $\sup_M u=u^*<+\infty$, by the weak maximum principle at infinity for the $f$-laplacian, there exists a sequence $\{x_k\}\subset M$ along which $u(x_k)\geq u^*-\frac{1}{k}$ and $\Delta_f u(x_k)\leq\frac{1}{k}$. Thus evaluating \eqref{Linfty} along $\{x_k\}$ and taking the limit as $k\rightarrow+\infty$ we obtain that $\lambda u^*\geq 0$ and since $u^*>0$ we cannot have $\lambda<0$.
\end{proof}

\section*{Appendix}
An extension of Myers' theorem to weighted manifolds with a positive lower bound on the $m$-Bakry-Emery Ricci tensor (m finite) is obtained by Qian in \cite{Q}. For generalizations of Myers' theorem in a different direction see \cite{M}.

In this section we extend Qian theorem by allowing some negativity of the $m$-Bakry-Emery Ricci tensor. The starting point of our considerations is the following Bochner formula for the $m$-Bakry-Emery Ricci tensor; see e.g. \cite{S}.

Let $u:M^n\rightarrow\mathbb{R}$ be a smooth function on a complete weighted manifold $(M^n,g_M,e^{-f}d\rm{vol})$ then
\begin{equation}\label{Bochner}
\frac{1}{2}\Delta_f\left|\nabla u\right|^2=\left|Hess(u)\right|^2+g_M( \nabla u, \nabla\Delta_fu)+ Ric_f^m(\nabla u, \nabla u)+\frac{1}{m}\left|g_M( \nabla f, \nabla u)\right|^2.
\end{equation}
Using this formula one obtains the following generalization of a well-known lemma which estimate the integral of Ricci along geodesics. The proof is modelled on \cite{Q}.
\begin{lemma}\label{main_lem}
Let $(M^n,g_M, e^{-f}d\rm{vol})$ be a complete weighted manifold, and consider the $m$-Bakry-Emery Ricci tensor $Ric_f^m$ for $m$ finite. Fix $o\in M$ and let $r\left(x\right)=dist\left(x,o\right)$. For any point $q \in M$, let $\gamma_{q}:\left[0,r\left(q\right)\right]\rightarrow M$ be a minimizing geodesic from $o$ to $q$ such that $\left|\dot{\gamma}_{q}\right|=1$. If $h\in Lip_{loc}\left(\mathbb{R}\right)$ is such that $h\left(0\right)=h\left(r\left(q\right)\right)=0$, then  for every $q\in M$, it holds
\begin{equation}\label{ineq_lemma}
0\leq \int^{r\left(q\right)}_{0}(m+n-1)\left(h^{\prime}\right)^{2}ds-\int^{r\left(q\right)}_{0}h^{2}Ric_f^m(\dot{\gamma}_q,\dot{\gamma}_q)ds.
\end{equation}
\end{lemma}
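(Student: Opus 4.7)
The plan is to imitate the classical second variation (index form) argument used by Qian to derive the $m$-Bakry-Emery analogue of the Myers diameter estimate. Since $\gamma_q$ is a minimizing unit-speed geodesic from $o$ to $q$, the index form
\[
I(V,V)=\int_0^{r(q)}\bigl(|\nabla_{\dot\gamma_q}V|^2-g_M(R(V,\dot\gamma_q)\dot\gamma_q,V)\bigr)\,ds
\]
is non-negative on every piecewise-smooth vector field $V$ along $\gamma_q$ vanishing at $s=0$ and $s=r(q)$. I would fix a parallel orthonormal frame $\{E_1,\dots,E_{n-1}\}$ along $\gamma_q$ perpendicular to $\dot\gamma_q$, plug in the test fields $V_i(s)=h(s)E_i(s)$ (admissible since $h\in Lip_{loc}(\mathbb{R})$ with $h(0)=h(r(q))=0$), and sum over $i$. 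Because $\nabla_{\dot\gamma_q}E_i=0$, this immediately yields the classical Riccati-type inequality
\[
0\leq\int_0^{r(q)}\bigl((n-1)(h')^2-h^2\,Ric(\dot\gamma_q,\dot\gamma_q)\bigr)\,ds.
\]

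Next, to bring in $Ric_f^m$, I would use the defining identity $Ric=Ric_f^m-Hess(f)+\tfrac{1}{m}\,df\otimes df$ evaluated on $\dot\gamma_q$. Setting $\phi(s)=f(\gamma_q(s))$, this reads $Ric(\dot\gamma_q,\dot\gamma_q)=Ric_f^m(\dot\gamma_q,\dot\gamma_q)-\phi''(s)+\tfrac{1}{m}(\phi'(s))^2$. The second-derivative term is handled by integration by parts, using the boundary conditions $h(0)=h(r(q))=0$,
\[
\int_0^{r(q)}h^2\phi''\,ds=-2\int_0^{r(q)}hh'\phi'\,ds,
\]
so that, after substitution, the preceding inequality becomes
\[
0\leq\int_0^{r(q)}\Bigl((n-1)(h')^2-h^2\,Ric_f^m(\dot\gamma_q,\dot\gamma_q)-2hh'\phi'-\tfrac{1}{m}h^2(\phi')^2\Bigr)\,ds.
\]

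The concluding step is an algebraic completion of the square: the pointwise inequality $\bigl(\sqrt{m}\,h'+\tfrac{1}{\sqrt m}\,h\phi'\bigr)^2\geq 0$ rearranges to
\[
-2hh'\phi'-\tfrac{1}{m}h^2(\phi')^2\leq m(h')^2,
\]
so substituting absorbs the two $\phi$-dependent terms into $m(h')^2$ and promotes the coefficient $(n-1)$ to $(m+n-1)$, giving exactly \eqref{ineq_lemma}.

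The main obstacle is really only identifying the correct completion of the square so that the two $\phi$-dependent terms produced by integration by parts and by the $-\tfrac{1}{m}df\otimes df$ piece of $Ric_f^m$ are absorbed into precisely $m(h')^2$. Conceptually this is Qian's observation that the $-\tfrac{1}{m}df\otimes df$ contribution plays the role of $m$ synthetic extra dimensions, and the above argument makes this rigorous by turning the classical Myers/Bochner coefficient $n-1$ into $m+n-1$.
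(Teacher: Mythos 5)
Your proof is correct, and it takes a genuinely different route from the paper's. The paper argues through the distance function: it applies the weighted Bochner formula \eqref{Bochner} to $r$, combines the pointwise inequalities \eqref{A} and \eqref{B} to obtain a Riccati-type inequality \eqref{eq_1} for $\Delta_f r$ along $\gamma_q$, multiplies by $h^2$, integrates by parts, and absorbs the cross term $-2hh'(\Delta_f r\circ\gamma_q)$ via Young's inequality with the weight $m+n-1$; because $r$ fails to be smooth on the cut locus, the paper must then invoke the Calabi trick (translating the origin to $o_\epsilon=\gamma_q(\epsilon)$ and letting $\epsilon\to 0$) to cover $q\in cut(o)$. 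You instead start from the classical second-variation/index-form inequality with the test fields $hE_i$, which gives the unweighted estimate with coefficient $n-1$, and then transfer from $Ric$ to $Ric_f^m$ by a purely one-dimensional computation along the geodesic: $Hess(f)(\dot\gamma_q,\dot\gamma_q)=\phi''$ with $\phi=f\circ\gamma_q$, integration by parts (legitimate for $h\in Lip_{loc}$ since $h^2$ is locally Lipschitz and $\phi$ is smooth), and the completion of the square $\bigl(\sqrt{m}\,h'+\tfrac{1}{\sqrt{m}}h\phi'\bigr)^2\geq 0$, whose signs I have checked and which is exactly the algebraic mechanism behind \eqref{A}, relocated from the Laplacian level to the test-function level. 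Your route buys a real simplification: the index form is non-negative along \emph{any} minimizing geodesic, including those ending at cut points of $o$, so the non-smoothness of $r$ never enters and the Calabi trick is unnecessary; the argument also uses only the restriction of $f$ to $\gamma_q$. What the paper's route buys in exchange is the intermediate Riccati inequality for $\Delta_f r$, i.e.\ a weighted Laplacian comparison, which is of independent use (it is the kind of estimate underlying the weighted volume bounds of Qian invoked elsewhere in the paper), whereas your argument produces only the integrated inequality \eqref{ineq_lemma}.
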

\begin{proof}
Fix a point $q\notin cut(o)$.
Straightforward computations show that
\begin{align}
\frac{(\Delta_f r)^2}{m+n-1}\leq&\frac{(\Delta r)^2}{n-1}+\frac{\left|g_M( \nabla f, \nabla r)\right|^2}{m},\label{A}\\
\left|Hess(r)\right|^2\geq&\frac{(\Delta r)^2}{n-1}\label{B}.
\end{align}
Using \eqref{A} and \eqref{B}, from the Bochner formula \eqref{Bochner} applied to the distance function $r(x)$ we obtain that
\[
\ 0\geq \frac{(\Delta_f r)^2}{m+n-1}+g_M( \nabla r, \nabla \Delta_f r)+ Ric_f^m(\nabla r, \nabla r).
\]
Evaluating this along a minimizing geodesic $\gamma_q$ such that $\left|\dot{\gamma}_q\right|=1$, we get
\begin{equation}\label{eq_1}
0\geq\frac{(\Delta_f r\circ\gamma_q)^2}{m+n-1}+\frac{d}{ds}(\Delta_f(r\circ\gamma_q))+Ric_f^m(\dot{\gamma}_q,\dot{\gamma}_q).
\end{equation}
If $h\in Lip_{loc}(\mathbb{R})$, $h\geq 0$, $h(0)=0$, multiplying \eqref{eq_1} by $h^2$ and integrating on $[0,t]$, we obtain
\begin{equation*}
0\geq\int_0^th^2\frac{(\Delta_f r\circ\gamma_q)^2}{m+n-1}ds+\int_0^t\frac{d}{ds}(\Delta_f r\circ\gamma_q)h^2+\int_0^th^2Ric_f^m(\dot{\gamma}_q,\dot{\gamma}_q).
\end{equation*}
Since $(\Delta_f r\circ\gamma_q)h^2\to0$ as $r\to0$, integrating by parts we have that
\begin{align*}
 0\geq&\int_0^t h^2\frac{(\Delta_f r\circ \gamma_q)^2}{m+n-1}ds+h^2(t)(\Delta_f r\circ\gamma_q)(t)\\&-2\int_0^t hh^{\prime}(\Delta_f r\circ\gamma_q)ds+ \int_0^th^2Ric_f^m(\dot{\gamma}_q,\dot{\gamma}_q)ds.
\end{align*}
Since
\[
\ -2hh^{\prime}(\Delta_f r\circ\gamma_q)\geq\frac{-h^2(\Delta_f r\circ\gamma_q)^2}{m+n-1}-(m+n-1)(h^{\prime})^2,
\]
we deduce that
\[
\ 0\geq h^2(t)(\Delta_f r\circ\gamma_q)-\int_0^t(m+n-1)(h^{\prime})^2ds+\int_0^tRic_f^m(\dot{\gamma}_q,\dot{\gamma}_q)h^2ds
\]
Thus, taking $t=r(q)$ and choosing $h$ such that $h^2(r(q))=0$, we get \eqref{ineq_lemma} for $q\notin cut(o)$. To treat the general case one can use the Calabi trick. Namely suppose that $q\in cut\left(o\right)$.
Translating the origin $o$ to $o_{\epsilon}=\gamma_{q}\left(\epsilon\right)$
so that $q\notin cut\left(o_{\epsilon}\right)$, using the triangle inequality
and, finally, taking the limit as $\epsilon\rightarrow 0$, one checks that
(\ref{ineq_lemma}) holds also in this case.
\end{proof}
From Lemma \ref{main_lem} some Myers' type results can be proven. Here we state the following which generalizes a theorem of G. J. Galloway, \cite{Gal-JDG}.

\begin{theorem}\label{Galloway-Myers}
Let $(M^n,g_M,e^{-f}d\rm{vol})$ be a complete weighted manifold. Given two different
points $p,q\in M$, let $\gamma_{p,q}$ be a minimizing geodesic
from $p$ to $q$ parameterized by arc length. Suppose that there
exist constants $c$ and $G\geq0$ such that for each pair of
points $p,q$ it holds
\[
Ric_f^m(\dot{\gamma}_{p,q},\dot{\gamma}_{p,q})|_{\gamma_{p,q}(t)}\geq (m+n-1)\left[c^2+\frac{d}{dt}\left(g\circ\gamma_{p,q}\right)\right],
\]
for some $C^1(M)$ function $g$ satisfing $\sup_M|g|\leq G$, $m<+\infty$. Then $M$ is compact and
\begin{equation}\label{estimate_diam}
\operatorname{diam}(M)\leq\frac{1}{c}\left[\frac{2G}{c}+\sqrt{\frac{4G^2}{c^2}+\pi^2}\right].
\end{equation}
\end{theorem}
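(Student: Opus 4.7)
The plan is to apply Lemma \ref{main_lem} to extract an integral inequality along any minimizing geodesic $\gamma_{p,q}$ and then to choose a suitable test function $h$ to convert it into a universal bound on the length $L=d(p,q)$. Fix $p,q\in M$, set $L=d(p,q)$, and let $\gamma=\gamma_{p,q}:[0,L]\to M$ be a unit-speed minimizing geodesic. Substituting the curvature hypothesis into \eqref{ineq_lemma} and dividing through by $m+n-1$ gives, for every $h\in Lip_{loc}(\mathbb{R})$ with $h(0)=h(L)=0$,
\[
c^2\int_0^L h^2\,ds+\int_0^L h^2\,\tfrac{d}{ds}(g\circ\gamma)\,ds\leq\int_0^L (h')^2\,ds.
\]

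The key observation is that the auxiliary function $g$ is introduced precisely so that the second term on the left can be removed at the cost of the bound $G$: integrating by parts and using $h(0)=h(L)=0$ to kill the boundary contributions,
\[
\int_0^L h^2\,(g\circ\gamma)'\,ds=-2\int_0^L h\,h'\,(g\circ\gamma)\,ds\geq -2G\int_0^L |h\,h'|\,ds,
\]
so the inequality reduces to
\[
c^2\int_0^L h^2\,ds\leq\int_0^L (h')^2\,ds+2G\int_0^L |h\,h'|\,ds.
\]

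The last step is the standard choice $h(t)=\sin(\pi t/L)$, for which elementary computations give $\int_0^L h^2\,dt=L/2$, $\int_0^L (h')^2\,dt=\pi^2/(2L)$, and $\int_0^L|h\,h'|\,dt=1$. Plugging in yields the quadratic inequality $c^2 L^2-4GL-\pi^2\leq 0$, whose positive root is exactly the claimed
\[
L\leq\frac{2G+\sqrt{4G^2+c^2\pi^2}}{c^2}=\frac{1}{c}\left[\frac{2G}{c}+\sqrt{\frac{4G^2}{c^2}+\pi^2}\right].
\]
Since this bound is uniform in $p,q$, it bounds $\operatorname{diam}(M)$, and Hopf--Rinow then forces $M$ to be compact.

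I expect no genuine obstacle: all cut-locus issues are already absorbed into Lemma \ref{main_lem} by the Calabi trick, and the integration by parts is legitimate since $g\in C^1(M)$ and $g\circ\gamma$ is absolutely continuous on $[0,L]$. The only real design choices are to arrange the hypothesis so that $g$ appears under a pure time derivative (so that the boundary-vanishing integration by parts removes it cleanly and only $\sup_M|g|$ survives) and to take $h=\sin(\pi t/L)$, which is the extremal for the flat case and correctly recovers Qian's (Myers-type) sharp constant $\pi/c$ in the limit $G\to 0$.
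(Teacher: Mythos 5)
Your proposal is correct and follows essentially the same route as the paper: apply Lemma \ref{main_lem} with the test function $h(t)=\sin(\pi t/L)$, eliminate the $g$-term by integrating by parts and invoking $\sup_M|g|\leq G$, and arrive at the same quadratic inequality $c^2L^2-4GL-\pi^2\leq 0$, whence the diameter bound and compactness by Hopf--Rinow. The only cosmetic difference is that you estimate the middle term via $2G\int_0^L|h\,h'|\,ds=2G$ directly, whereas the paper keeps $\int_0^L \frac{d}{dt}(h^2)\,(g\circ\gamma)\,dt$ and splits the integral at $t=L/2$; both yield the identical $2G$ contribution.
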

\begin{proof}
Define $L$ to be the length of $\gamma_{p,q}$ between $p$ and $q$ and set $h(t):=\sin(\frac{\pi}{L}t)$. Compute
\[
\int_0^Lh^2(t)dt=\int_0^L\sin^2(\frac{\pi}{L}t)dt=\frac{L}{2};\quad\int_0^L\left.h'\right.^2(t)dt=\frac{\pi^2}{L^2}\int_0^L\cos^2(\frac{\pi}{L}t)dt=\frac{\pi^2}{2L}.
\]
Then, applying Lemma \ref{main_lem}, we have
\begin{align}\label{myers_ineq}
\frac{\pi^2(m+n-1)}{2L} =& \int_0^L(m+n-1)\left.h'\right.^2 \geq \int_0^Lh^2Ric_f^m(\dot{\gamma}_{p,q},\dot{\gamma}_{p,q})|_{\gamma_{p,q}}ds \\
\geq& c^2(m+n-1)\int_0^L h^2 + (m+n-1)\int_0^L h^2\frac{d}{dt}(g\circ\gamma_{p,q}) \nonumber\\
=& \frac{c^2(m+n-1)L}{2}+\left.(m+n-1)h^2g(\gamma_{p,q})\right|_0^L \nonumber\\
&- (m+n-1)\left[\int_0^{\frac{L}{2}}(\frac{d}{dt}h^2)(g\circ\gamma_{p,q})+\int_{\frac{L}{2}}^L(\frac{d}{dt}h^2)(g\circ\gamma_{p,q})\right] \nonumber\\
\geq& \frac{c^2(m+n-1)L}{2}-(m+n-1)G\left[\int_0^{\frac{L}{2}}(\frac{d}{dt}h^2)+\int_{\frac{L}{2}}^L\left|\frac{d}{dt}h^2\right|\right]\nonumber\\
\geq&\frac{c^2(m+n-1)L}{2}-2(m+n-1)G\nonumber
\end{align}
Finally, this latter can be written as
\[
c^2L^2-4GL-\pi^2\leq 0,
\]
which in turn implies \eqref{estimate_diam}, because $p$ and $q$ are arbitrary.
\end{proof}
Reasoning as in the classical case, (\cite{Gal-PAMS}, \cite{MRV}) the validity of \eqref{ineq_lemma} and an integration by parts shows that the compactness of $M$ depends on the behavior, and on the position of the zeros, of the solution of the differential equation along minimizing geodesics
\begin{equation}\label{eq_diff} -h^{\prime\prime}(t)-\frac{Ric_f^m(\dot{\gamma},\dot{\gamma})}{m+n-1}h(t)=0
\end{equation}
We are thus reduced to find sufficient condition on $Ric_f^m$ for which solutions of the differential equation \eqref{eq_diff} have a first zero at finite time. Minor changes to the proofs of the results contained in \cite{MRV} lead to similar compactness results in the weighted setting. In particular we state the following theorem in which a Myers' type conclusion is obtained assuming a nonpositive lower bound on $Ric_f^m$.
\begin{theorem}\label{th_main-B2}
Let $Ric_f^m\geq -(m+n-1)B^2$, for some constant $B\geq0$, $m<+\infty$. Suppose
there is a point $q\in M$ such that along each geodesic
$\gamma:[0,+\infty)\to M$ parameterized by arc length, with
$\gamma(0)=q$, it holds either
\begin{align}\label{ass_main-B2_0}
    \int_a^bt\frac{Ric_f^m(\dot{\gamma},\dot{\gamma})}{m+n-1}dt
    >
    B\left\{b+a\frac{e^{2Ba}+1}{e^{2Ba}-1}\right\}+\frac{1}{4}\log\left(\frac{b}{a}\right).
\end{align}
or
\begin{equation}\label{ass_main-B2}
\int_a^bt^{\alpha}\frac{Ric_f^m(\dot{\gamma},\dot{\gamma})}{m+n-1}(t)dt
>
B\left\{b^{\alpha}+a^{\alpha}\frac{e^{2Ba}+1}{e^{2Ba}-1}\right\}+\frac{\alpha^2}{4(1-\alpha)}\left\{a^{\alpha-1}-b^{\alpha-1}\right\}
\end{equation}
for some $0<a<b$ and $\alpha\neq1$. Then $M$ is compact.
\end{theorem}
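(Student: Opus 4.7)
The plan is to argue by contradiction, assuming $M$ non-compact. By completeness and the Hopf-Rinow theorem there exists a unit-speed minimizing ray $\gamma:[0,+\infty)\to M$ with $\gamma(0)=q$, so for every $L>b$ the segment $\gamma|_{[0,L]}$ is minimizing and Lemma \ref{main_lem} applies. I would then insert a carefully chosen locally Lipschitz test function $h:[0,L]\to[0,+\infty)$ vanishing at $0$ and $L$ into \eqref{ineq_lemma}; after letting $L\to+\infty$ this should produce an upper bound on $\int_a^b t^\alpha Ric_f^m(\dot\gamma,\dot\gamma)\,ds$ incompatible with \eqref{ass_main-B2}, yielding the contradiction.

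For the test function I would patch together three pieces. On the outer intervals, where I must use the Ricci lower bound $Ric_f^m\geq-(m+n-1)B^2$, I set
\[
h(t)=a^{\alpha/2}\frac{\sinh(Bt)}{\sinh(Ba)}\text{ on }[0,a],\qquad h(t)=b^{\alpha/2}\frac{\sinh(B(L-t))}{\sinh(B(L-b))}\text{ on }[b,L],
\]
which both satisfy $h''=B^2h$. An integration by parts using $h(0)=h(L)=0$ gives
\[
\int_0^a\bigl[(h')^2+B^2h^2\bigr]\,dt=h(a)h'(a)=a^\alpha B\coth(Ba),
\]
and analogously $\int_b^L[(h')^2+B^2h^2]\,dt=b^\alpha B\coth(B(L-b))$. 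On the middle interval I take $h(t)=t^{\alpha/2}$, which matches the endpoint values, and a direct computation yields $\int_a^b(h')^2\,dt=\tfrac{\alpha^2}{4(1-\alpha)}(a^{\alpha-1}-b^{\alpha-1})$. The resulting $h$ is continuous with a jump only in $h'$ across $t=a,b$, hence admissible in $Lip_{loc}$.

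Summing the three contributions in \eqref{ineq_lemma} and bounding $-h^2Ric_f^m\leq(m+n-1)B^2h^2$ only on the outer intervals produces
\[
\int_a^b t^\alpha\frac{Ric_f^m(\dot\gamma,\dot\gamma)}{m+n-1}\,ds\leq B\bigl\{a^\alpha\coth(Ba)+b^\alpha\coth(B(L-b))\bigr\}+\frac{\alpha^2}{4(1-\alpha)}\bigl(a^{\alpha-1}-b^{\alpha-1}\bigr),
\]
and letting $L\to+\infty$, so that $\coth(B(L-b))\to 1$, gives an estimate that contradicts \eqref{ass_main-B2} after rewriting $\coth(Ba)=(e^{2Ba}+1)/(e^{2Ba}-1)$. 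The borderline case \eqref{ass_main-B2_0} is handled identically by taking the limiting choice $\alpha=1$ with $h(t)=\sqrt{t}$ on $[a,b]$, which produces the logarithmic term $\tfrac14\log(b/a)$ in place of the power difference.

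The main obstacles I anticipate are mild: verifying that the piecewise test function is admissible in Lemma \ref{main_lem} (only continuity is required since the statement allows $Lip_{loc}$), and controlling the $L\to+\infty$ passage uniformly on the outer segment $[b,L]$ (where $\coth(B(L-b))\to 1$ gives clean convergence to $b^\alpha B$). Since $\gamma$ is a ray it does not intersect $\mathrm{cut}(q)$, so no Calabi perturbation is required and Lemma \ref{main_lem} is directly applicable on every $[0,L]$.
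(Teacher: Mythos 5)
Your argument is correct, and it is genuinely different from what the paper does: the paper gives no self-contained proof of Theorem \ref{th_main-B2}, but instead reduces compactness, via \eqref{ineq_lemma} and an integration by parts, to the existence of a first zero at finite time for solutions of the ODE \eqref{eq_diff} along minimizing geodesics, and then appeals to the oscillation results of \cite{MRV} (and \cite{Gal-PAMS}) ``with minor changes''. You bypass the oscillation machinery entirely and run a direct index-form-style computation: on a complete non-compact $M$ there is a ray $\gamma$ from $q$, a ray contains no cut points of its origin (two distinct minimizing geodesics to $\gamma(L)$ would, by the corner-smoothing argument, prevent $\gamma$ from minimizing past $L$), so Lemma \ref{main_lem} applies on every $[0,L]$ --- though note the Calabi trick built into the lemma makes this refinement inessential. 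Your computations check out: the $\sinh$ profiles satisfy $h''=B^2h$, so $\int_0^a[(h')^2+B^2h^2]=h(a)h'(a)=a^{\alpha}B\coth(Ba)$ and $\int_b^L[(h')^2+B^2h^2]=-h(b)h'(b)=b^{\alpha}B\coth(B(L-b))$, the middle piece $h=t^{\alpha/2}$ (resp.\ $\sqrt{t}$) yields $\frac{\alpha^2}{4(1-\alpha)}(a^{\alpha-1}-b^{\alpha-1})$ (resp.\ $\frac14\log(b/a)$) for every $\alpha\neq1$ regardless of sign, and using the lower bound $Ric_f^m\geq-(m+n-1)B^2$ only on $[0,a]\cup[b,L]$ and letting $L\to\infty$ (legitimate, since the left-hand side is independent of $L$) gives an upper bound equal exactly to the right-hand sides of \eqref{ass_main-B2_0}--\eqref{ass_main-B2}, contradicting their strictness. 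Two minor caveats: your test functions require $B>0$ (at $B=0$ the theorem's right-hand side itself degenerates; one would replace the $\sinh$ profiles by linear cutoffs $t/a$ and $(L-t)/(L-b)$), and the admissibility point is fine since your $h$ is piecewise smooth, continuous, hence locally Lipschitz as the lemma requires. On balance, your route buys a complete in-paper proof in the same spirit as the paper's own proof of Theorem \ref{Galloway-Myers} (there with $h=\sin(\pi t/L)$), and it explains where the constants in the statement come from; the paper's route via \eqref{eq_diff} buys generality, since any oscillation criterion for that ODE immediately produces a corresponding Myers-type theorem.
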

\bigskip
\begin{acknowledgement*}
The author is deeply grateful to to his advisor Stefano Pigola for his guidance and constant encouragement during the
preparation of the manuscript.  The author would like to thank also Dezhong Chen for pointing out to him Proposition \ref{rmk_min}
\end{acknowledgement*}

\bigskip

\end{document}